\documentclass[11pt]{article}
\usepackage{amssymb}
\usepackage{mathrsfs}
\addtolength{\topmargin}{-.5in} \addtolength{\textheight}{1in}
\addtolength{\oddsidemargin}{-.5in}
\addtolength{\evensidemargin}{-.5in}
\addtolength{\textwidth}{1in}
\usepackage{latexsym,amsmath,amssymb,amsfonts,epsfig,graphicx,cite,psfrag}
\usepackage{eepic,color,colordvi,amscd}
\usepackage{ebezier}
\usepackage{verbatim}
\usepackage{subfigure}

\usepackage{amsthm}
\usepackage{comment}

\theoremstyle{plain}
\newtheorem{theo}{Theorem}[section]

\newtheorem{lem}[theo]{Lemma}

\theoremstyle{definition}

\theoremstyle{remark}

\setcounter{section}{0}

\def\qed{\hfill \rule{4pt}{7pt}}


\pagestyle{plain}

\begin{document}
\title{4-Separations in Haj\'{o}s graphs\footnote{Partially supported by NSF grant DMS-1600738.
Email: qqxie@fudan.edu.cn (Q. Xie), shijie.xie@gatech.edu (S. Xie), yu@math.gatech.edu (X. Yu), and  xyuan@gatech.edu (X. Yuan)}}

\author{Qiqin Xie\\
Shanghai Center for Mathematical Sciences\\
Fudan University\\
Shanghai, China 200438\\
\medskip\\
 Shijie Xie, Xingxing Yu, and Xiaofan Yuan\\
School of Mathematics\\
Georgia Institute of Technology\\
Atlanta, GA 30332--0160, USA }

\date{April 26, 2020}

\maketitle

\begin{abstract}

As a natural extension of the Four Color Theorem, Haj\'{o}s conjectured that graphs containing no $K_5$-subdivision are  4-colorable. Any possible counterexample to this conjecture with minimum
number of vertices is called a {\it  Haj\'{o}s graph}. Previous results
show that Haj\'{o}s graphs are 4-connected but not 5-connected. A $k$-separation in a graph $G$ is a pair $(G_1,G_2)$ of edge-disjoint subgraphs of $G$ such that
$|V(G_1\cap G_2)|=k$, $G=G_1\cup G_2$, and $G_i\not\subseteq G_{3-i}$ for $i=1,2$. In this paper, we show that   Haj\'{o}s graphs
do not admit a 4-separation $(G_1,G_2)$ such that  $|V(G_1)|\ge 6$ and $G_1$ can be drawn in the plane with no edge crossings and all vertices in $V(G_1\cap G_2)$
incident with a common face. This is a step in our attempt to reduce Haj\'{o}s' conjecture to the Four Color Theorem.

\bigskip

AMS Subject Classification: 05C10, 05C40, 05C83

Keywords: Wheels, coloring, graph subdivision, disjoint paths
\end{abstract}


\newpage
\section{Introduction}

Using  Kuratowski's characterization of planar graphs   \cite{Ku30},
the Four Color Theorem \cite{Ap77a, Ap77b, Ap89, FCT} can be stated as follows: Graphs containing no $K_5$-subdivision or $K_{3,3}$-subdivision are 4-colorable.
Since $K_{3,3}$ has chromatic number 2, it is natural to expect that graphs containing no $K_5$-subdivision are also 4-colorable. Indeed,
this is part of a more general conjecture made by
Haj\'{o}s in the 1950s (see \cite{Th05}, although reference
\cite{Ha61} is often cited): For any positive integer $k$, every graph
not containing $K_{k+1}$-subdivision is $k$-colorable. It is not hard to
prove this conjecture  for $k \le 3$. However,
Catlin \cite{Ca79} disproved Haj\'{o}s' conjecture for $k \ge
6$.  Erd\H{o}s and Fajtlowicz \cite{EF81} then showed that
Haj\'{o}s' conjecture fails for almost all graphs. On the other hand,
K\"{u}hn and Osthus \cite{KO02} proved that Haj\'{o}s' conjecture
holds for graphs with large girth, and Thomassen \cite{Th05} pointed
out interesting connections between Haj\'{o}s' conjecture and several
important problems, including Ramsey numbers, Max-Cut, and perfect
graphs.  Haj\'{o}s' conjecture remains open for $k=4$ and $k=5$.

In this paper, we are concerned with Haj\'{o}s' conjecture for
$k=4$. We say that a graph $G$ is a {\it Haj\'{o}s graph} if
\begin{itemize}
\item [(1)] $G$ contains no $K_5$-subdivision,
\item [(2)] $G$ is not 4-colorable, and
\item [(3)] subject to (1) and (2), $|V(G)|$ is minimum.
\end{itemize}
Thus, if no Haj\'{o}s graph exists then graphs not containing
$K_5$-subdivisions are 4-colorable.

Recently, He, Wang, and Yu \cite{KSC1, KSC2,
  KSC3, KSC4} proved that every 5-connected nonplanar graph
contains a $K_5$-subdivision, establishing a conjecture of Kelmans
\cite{Ke79} and, independently, of Seymour \cite{Se77} (also see Mader
\cite{Ma98}).  Therefore, Haj\'{o}s graphs cannot be 5-connected. On the other hand,
Yu and Zickfeld \cite{Yu06} proved that  Haj\'{o}s graphs must be
4-connected, and Sun and Yu \cite{Su16} proved that for any 4-cut $T$
in an Haj\'{o}s graph $G$, $G-T$ has exactly 2 components.

The goal of this paper is to prove a result useful for modifying  the recent proof of the Kelmans-Seymour conjecture  in  \cite{KSC1, KSC2, KSC3,KSC4} to
make progress on the Haj\'{o}s conjecture; in particular, for the class of graphs containing
$K_4^-$ as a subgraph, where $K_4^-$ is the graph obtained from $K_4$ by removing an edge.

To state our result precisely, we need some notation. Let $G_1$, $G_2$
be two graphs.
We use $G_1 \cup G_2$ (respectively, $G_1\cap G_2$) to denote the graph with vertex set
$V(G_1) \cup V(G_2)$ (respectively, $V(G_1)\cap V(G_2)$) and edge set
$E(G_1) \cup E(G_2)$ (respectively, $E(G_1)\cap E(G_2)$).   Let $G$ be a
graph and $k$ a nonnegative integer; then a \textit{k-separation} in $G$ is a pair $(G_1,G_2)$ of edge-disjoint
subgraphs $G_1,G_2$ of $G$ such that $G = G_1 \cup G_2$,  $|V(G_1\cap G_2)|=k$, and $G_i\not\subseteq G_{3-i}$ for $i=1,2$.

Let $G$ be a graph and $S\subseteq V(G)$.
For convenience, we say that $(G,S)$ is {\it planar} if $G$ has a drawing in a closed disc in the plane with no edge crossings and
with vertices in $S$ on the boundary of the disc. We often assume that we work with such an embedding when we say $(G,S)$ is planar.
Two elements of $V(G)\cup E(G)$ are said to be {\it cofacial} if they are incident with a common face.
Our main result can be stated as follows, it will be used in subsequent work to derive further useful structure of Haj\'{o}s graphs.

\begin{theo}\label{main}
If $G$ is a Haj\'os graph and $G$ has a 4-separation $(G_1, G_2)$ such that $(G_1,
V(G_1\cap G_2))$ is planar then $|V(G_1)| \le 5$.
\end{theo}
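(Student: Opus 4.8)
The plan is to argue by contradiction. Suppose $G$ is a Haj\'os graph with a 4-separation $(G_1,G_2)$ where $(G_1, V(G_1\cap G_2))$ is planar and $|V(G_1)| \ge 6$. I want to replace the planar "chunk" $G_1$ by a small gadget $H$ on at most a few vertices, attached along the four cut vertices $T := V(G_1\cap G_2)$, so as to produce a smaller graph $G' = H \cup G_2$ that is still not 4-colorable and still contains no $K_5$-subdivision. Since $|V(G_1)|\ge 6$ and $H$ is small, $|V(G')| < |V(G)|$, contradicting the minimality of $G$.

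Let me think about the coloring side first. Write $T = \{a,b,c,d\}$ for the four cut vertices, lying on the outer face of the disc in which $G_1$ is drawn. The key object is the set of "constraints" that $G_1$ forces on any proper $4$-coloring restricted to $T$: namely, which coloring patterns $\phi: T \to \{1,2,3,4\}$ extend to a proper $4$-coloring of $G_1$. Call a pattern *realizable* if it so extends. Because $G$ is not $4$-colorable but every proper subgraph is (by minimality, via (3)), the obstruction lives precisely in how these realizable patterns interact with $G_2$. The natural gadget $H$ is the smallest graph on vertex set $T$ (plus possibly one or two auxiliary vertices) whose set of realizable patterns on $T$ equals that of $G_1$. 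Since $T$ has only four vertices, there are only finitely many possible constraint sets, and the heart of the argument will be a case analysis showing each such constraint set is realized by some planar gadget $H$ on at most $5$ vertices --- here I would lean heavily on the planarity of $(G_1, T)$, which restricts the crossing structure of the paths in $G_1$ joining the four boundary vertices and hence limits which patterns can arise (e.g. planarity typically prevents a "full $K_4$" constraint combined with certain cross constraints, by a Menger/noncrossing-paths argument). First I would enumerate the relevant constraint types (which pairs of $T$ must receive distinct colors, whether all four can/must be distinct, etc.) and for each exhibit the planar gadget.

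The subdivision side is the delicate part. Replacing $G_1$ by $H$ must not create a $K_5$-subdivision in $G' = H \cup G_2$. A $K_5$-subdivision in $G'$, if one appeared, would have to use branch vertices and paths routed through $H$; since $H$ is planar and attaches to $G_2$ only along the four vertices of $T$, any such subdivision passing through $H$ would use $H$ essentially as a collection of internally disjoint paths among $T$. The plan is to show that whatever disjoint-path capacity $H$ provides between vertices of $T$, the original planar $G_1$ provides at least as much, so that a $K_5$-subdivision in $G'$ could be "pulled back" through $G_1$ to give one in $G$, contradicting (1). This pull-back requires that for every set of disjoint paths that $H$ can route among the terminals $T$ (with prescribed endpoints), $G_1$ can route a corresponding set; planarity of $(G_1,T)$ together with $|V(G_1)|\ge 6$ and the $4$-connectivity of $G$ (from \cite{Yu06}) should guarantee this. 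So I would design $H$ to simultaneously match the realizable color patterns \emph{and} not exceed the routing capacity of $G_1$ between terminals, and verify both conditions case by case.

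\textbf{Main obstacle.} The hard part will be the simultaneous control of both the coloring behavior and the subdivision (disjoint-path) behavior of the gadget. Matching the set of realizable $4$-coloring patterns is a finite check, but I must ensure the \emph{same} gadget does not introduce new $K_5$-subdivisions, and these two demands pull in opposite directions: a richer gadget is better for reproducing color constraints but more dangerous for creating subdivisions. The crux is therefore a structural lemma, proved using planarity of $(G_1,T)$ and the $4$-connectivity and the Sun--Yu result (each $4$-cut yields exactly two components, \cite{Su16}), that characterizes exactly which noncrossing systems of $T$-paths live in $G_1$; this characterization is what lets me choose $H$ small (at most $5$ vertices) while certifying both properties. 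I expect the bulk of the work, and the real difficulty, to be in handling the boundary cases where the constraint set is nearly maximal --- precisely the configurations where a careless gadget would manufacture a spurious $K_5$-subdivision in $G_2 \cup H$.
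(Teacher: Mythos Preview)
Your approach is genuinely different from the paper's, and it has a real gap in the subdivision half of the argument.

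The paper does not use gadget replacement at all. Instead it directly constructs a $K_5$-subdivision in $G$, contradicting property~(1) rather than minimality. The engine is: inside the planar piece $G_1$ one finds a wheel $W$ centered at some $w$, together with four internally disjoint paths from $w$ to the four cut vertices $T=V(G_1\cap G_2)$, each path meeting the rim of $W$ exactly once (a ``$V(G_1\cap G_2)$-extendable'' wheel). On the other side, since $(G_2,T)$ cannot be planar (else $G$ would be planar and hence $4$-colorable), Seymour's two-paths theorem (Lemma~\ref{2link}) gives two disjoint crossing paths in $G_2$ between opposite pairs in $T$. The wheel, the four paths, and the two crossing paths together form a $K_5$-subdivision in $G$ (Lemma~\ref{nogoodwheel}). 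The substantial work, occupying Sections~2--3, is to show that an extendable wheel actually exists; this proceeds via an auxiliary $5$-separation $(H,L)$ and a minimality hierarchy on $(|S|,H)$, and one must rule out the obstructions of Figure~\ref{obstructions} and eliminate the four configurations of Lemma~\ref{extension-4cut} one by one.

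Your plan stumbles at exactly the point where the paper does its heavy lifting. Take the most natural $5$-vertex gadget, the wheel $W_4$ on $T\cup\{v\}$. Since $(G_2,T)$ is nonplanar, $G_2$ contains disjoint crossing paths, and $W_4\cup G_2$ already contains a $K_5$-subdivision with branch set $\{v,a,b,c,d\}$. Your pull-back would then need, inside $G_1$, a vertex $v'$ with four internally disjoint paths to $a,b,c,d$ together with four further internally disjoint $ab$, $bc$, $cd$, $da$ paths avoiding them---that is, precisely an extendable wheel. Four-connectivity of $G$ does give four $v'$--$T$ paths for any interior $v'$, but it does not by itself supply the compatible rim cycle disjoint from those paths; establishing that is the content of Lemmas~\ref{5cutobs}--\ref{extension-4cut2}, not a corollary of \cite{Yu06} or \cite{Su16}. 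So for this gadget your step ``pull back any $K_5$-subdivision'' is equivalent to the paper's main technical burden. If instead you weaken the gadget so that no interior vertex has degree $4$ (to prevent it from serving as a branch vertex), you are left with a planar graph on $T$ with all four vertices on the outer boundary---hence a subgraph of $C_4$ plus one diagonal---and there is no reason its set of realizable $4$-colorings on $T$ is contained in that of $G_1$. The ``structural lemma'' you invoke to reconcile the coloring and routing demands simultaneously is not a consequence of the cited inputs; it is essentially the theorem itself.
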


To prove Theorem~\ref{main}, we first find a special wheel inside $G_1$, then extend the wheel to
$V(G_1\cap G_2)$  by four disjoint paths inside $G_1$, and  form a $K_5$-subdivision with
two disjoint paths  in $G_2$.
By a  \textit{wheel} we mean a graph which consists of a cycle $C$,  a vertex $v$ not on $C$ (known as the {\it center} of the wheel),
and at least three edges from $v$ to a subset of $V(C)$.  The wheels in this paper are special -- they are inside a plane graph consisting of vertices and edges
that are cofacial with a given vertex. For any positive integer $k$,
let $[k]:=\{1, \ldots, k\}$.

Let $G$ be a graph and $(G',G'')$ be a separation in $G$ such that $G'$ is drawn in a closed disc in the plane
with no edge crossings and $V(G'\cap G'')$ is contained in the boundary of that disc.
Let $w\in V(G')\setminus V(G'')$ such that the vertices and edges of $G'$ cofacial with $w$ form a wheel, denoted as $W$.
(Note such $W$ is well defined when $G$ is 3-connected and $w$ is not incident with the unbounded face of $G'$.) Thus $N_G(w)\subseteq V(W)$.
We say that
$W$ is {\it $V(G'\cap G'')$-good} if $V(G'\cap G'')\cap V(W) \subseteq N_G(w)$.
For any $S\subseteq V(G'\cap G'')$ with $|S|\le 4$, we say that $W$ is {\it $(V(G'\cap G''),S)$-extendable} if $G$ has four paths $P_1,P_2,P_3,P_4$ from $w$ to $V(G'\cap G'')$ such that
      \begin{itemize}
             \item  $V(P_i\cap P_j)=\{w\}$ for all distinct $i,j\in [4]$,
          \item  $|V(P_i-w)\cap V(W)|=1$ for $i\in [4]$, and
            \item for any $s\in S$ there exists $i\in [4]$ such that $P_i$ is from $w$ to $s$.
      \end{itemize}
Note that each $P_i$ may use more than one vertex from $V(G'\cap G'')$. When $S=V(G'\cap G'')$ we simply say that $W$ is
{\it $V(G'\cap G'')$-extendable}.

For the proof of Theorem~\ref{main}, we suppose $G$ has a 4-separation  $(G_1, G_2)$ such that $(G_1, V(G_1\cap G_2))$ is planar and $|V(G_1)| \ge 6$.
A result from \cite{XXYY19} shows that $G_1$ has a $V(G_1\cap G_2)$-good wheel. However, we need to allow the separation
$(G_1,G_2)$ to be a 5-separation in order to deal with issues when such wheels are not $V(G_1\cap G_2)$-extendable.
Another result from \cite{XXYY19} characterize all such 5-separations $(G_1,G_2)$ with $G_1$ containing no $V(G_1\cap G_2)$-good
wheel.  In Sections 2 and 3, we characterize the situations where good wheels are also extendable.
We complete the proof of Theorem~\ref{main} in Section 4.

It will be convenient to use a sequence of vertices to represent a path or cycle, with consecutive vertices representing an edge in the path.
Let $G$ be a graph. For $v\in V(G)$, we use $N_G(v)$ to denote the neighborhood of $v$ in  $G$. Let $T \subseteq V(G)$.
We use $G-T$ to denote the subgraph of $G$ induced by $V(G) \setminus  T$ and write $G-x$ when $T=\{x\}$. For any set $S$ of 2-element subsets of $V(G)$, we use
$G+S$ to denote the graph with $V(G+S)=V(G)$ and $E(G+S)=E(G)\cup S$, and write $G+xy$ if $S=\{\{x,y\}\}$.

Let $C$ be a cycle in a plane graph, and let  $u,v\in V(C)$. If $u=v$  let $uCv=u$, and if $u\ne v$ let
$uCv$ denote the subpath of $C$ from $u$ to $v$ in clockwise order.


\section{Extending a wheel}

In \cite{Yu06} it is shown that Haj\'{o}s graphs are 4-connected, and in \cite{KSC4} it is shown that Haj\'{o}s graphs are not 5-connected. So we have the following result.

\begin{lem}\label{4conn}
Haj\'{o}s graphs are 4-connected but not 5-connected.
\end{lem}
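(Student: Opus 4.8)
The plan is to assemble the statement from two separate sources, since it couples an upper and a lower bound on the connectivity of a Haj\'os graph $G$. The non-5-connectedness is the quicker of the two, and I would derive it directly from the Kelmans--Seymour conjecture, now a theorem via He, Wang, and Yu \cite{KSC1, KSC2, KSC3, KSC4}, which asserts that every 5-connected nonplanar graph contains a $K_5$-subdivision. First I would record that $G$ is nonplanar: by the Four Color Theorem every planar graph is 4-colorable, yet property (2) in the definition of a Haj\'os graph says $G$ is not 4-colorable, so $G$ cannot be planar. Now suppose for contradiction that $G$ is 5-connected. Being 5-connected and nonplanar, $G$ would contain a $K_5$-subdivision by the Kelmans--Seymour theorem, contradicting property (1). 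Hence $G$ is not 5-connected.

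For the lower bound, that $G$ is 4-connected, I would invoke the result of Yu and Zickfeld \cite{Yu06} directly. The underlying reasoning is a minimality/reduction argument: since $G$ has the fewest vertices among all graphs satisfying (1) and (2), any small separation can be exploited either to build a smaller counterexample or to assemble a proper 4-coloring, yielding a contradiction. Concretely, one first notes that $G$ has minimum degree at least $4$ (a vertex of degree at most $3$ could be deleted, the smaller graph 4-colored by minimality, and the deleted vertex then colored, since it sees at most three colors). One then rules out cuts of size $1$, $2$, and $3$ in turn: for a cut vertex one colors the blocks separately and permutes colors to agree at the cut; for a 2-cut $\{u,v\}$ one adds the edge $uv$ when absent, 4-colors the resulting pieces so that $u$ and $v$ receive distinct colors, and glues the colorings; the 3-cut case is handled analogously after augmenting the pieces with suitable edges to force the three cut vertices to be colored distinctly.

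The hard part of redoing the 4-connectedness from scratch would be the 3-cut case. Adding edges among the three cut vertices to control how they are colored can create a new $K_5$-subdivision, so one must argue carefully that any such subdivision in an augmented piece pulls back to a $K_5$-subdivision in $G$ itself, contradicting (1); balancing this against the need to still obtain a proper 4-coloring is the delicate point, and it is exactly the content established in \cite{Yu06}. Combining the two bounds gives the statement, so in the write-up I would simply cite \cite{Yu06} for 4-connectedness and \cite{KSC1, KSC2, KSC3, KSC4} for non-5-connectedness rather than reproduce either proof.
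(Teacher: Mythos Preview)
Your proposal is correct and takes essentially the same approach as the paper: the paper does not prove this lemma directly but simply cites \cite{Yu06} for 4-connectedness and \cite{KSC4} for non-5-connectedness, exactly as you suggest. Your additional explanation of the underlying arguments (the Four Color Theorem plus Kelmans--Seymour for the upper bound, and the minimality/coloring reduction for the lower bound) is accurate and goes beyond what the paper itself provides.
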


We also need a result from \cite{XXYY19} which characterizes the 4-separations and  5-separations $(G_1,G_2)$ with $(G_1,V(G_1\cap G_2))$ planar such that $G_1$ has no
$V(G_1\cap G_2)$-good wheel.
See Figure~\ref{obstructions} below for the graph $G_1$, where the solid vertices are in $V(G_1)\setminus V(G_2)$.
\begin{figure}[h]
        \includegraphics[width=\textwidth]{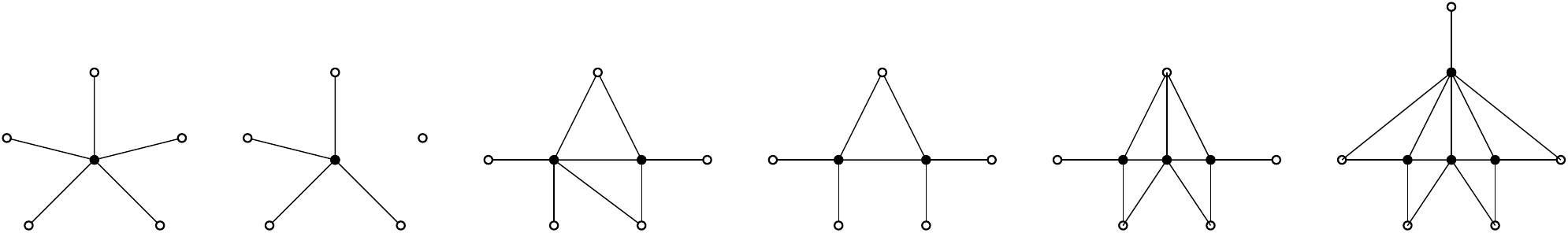}
           \caption{Obstructions to good wheels inside 5-separations.}
             \label{obstructions}
  \centering
    \end{figure}

\begin{lem}\label{5cutobs}
Let $G$ be a Haj\'os graph and  $(G_1, G_2)$ be a separation in $G$ such that $4\le |V(G_1\cap G_2)|\le 5$, $V(G_1\cap G_2)$ is independent in $G_1$, $(G_1,V(G_1\cap G_2))$ is planar, and
$V(G_1)\setminus V(G_2)\ne \emptyset$.
Then,  one of the following holds:
\begin{itemize}
\item [$(i)$] $G_1$ contains a $V(G_1\cap G_2)$-good wheel.
\item [$(ii)$] $|V(G_1\cap G_2)|=4$ and $|V(G_1)|=5$.
\item [$(iii)$] $|V(G_1\cap G_2)|=5$, $G_1$ is one of the graphs in Figure~\ref{obstructions} with $V(G_1)\setminus V(G_2)$ consisting of the solid vertices,
and if $|V(G_1)|=8$ then the degree 3 vertex in $G_1$  has degree at least 5 in $G$.
\end{itemize}
\end{lem}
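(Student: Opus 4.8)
The plan is to argue entirely inside the given disc embedding of $G_1$, writing $S:=V(G_1\cap G_2)$ for the boundary set, and to lean on the $4$-connectivity of $G$ (Lemma~\ref{4conn}) at every turn. I would begin with reductions. Since each $w\in V(G_1)\setminus S$ has $N_G(w)\subseteq V(G_1)$, $4$-connectivity gives $\deg_{G_1}(w)=\deg_G(w)\ge 4$, so no interior vertex has small degree. I would further reduce to the case that $G_1$ is $2$-connected with outer boundary a cycle $B\supseteq S$: a cut vertex or a small separator of $G_1$ avoiding $S$ would produce a cut of $G$ of order $<4$, which is forbidden. With these reductions, for every interior vertex $w\notin V(B)$ the closed star of $w$ is a disc, so the vertices and edges cofacial with $w$ form a genuine wheel $W_w$ with at least four spokes, and all the wheels in question are well defined.

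The conceptual core is a reformulation of the defining condition: $W_w$ is $S$-good exactly when every vertex of $S$ on the rim of $W_w$ is a neighbor of $w$; in particular, if no face incident to $w$ meets $S$, then $S\cap V(W_w)=\emptyset$ and $W_w$ is vacuously $S$-good. Negating conclusion~$(i)$, I may therefore assume that \emph{every} interior vertex $w$ lies on a face of $G_1$ whose boundary contains some $s\in S$ with $ws\notin E(G_1)$. Because $S$ is independent and lies on $B$, any such bad face has size at least $4$ and reaches from the boundary inward to $w$. I would use this, together with the degree bound and planarity, to confine all interior vertices to the (few) faces incident to the at most five vertices of $S$, and thereby to bound $|V(G_1)|$.

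To turn this confinement into an actual good wheel when $G_1$ is not tiny, I would argue extremally: choose an interior vertex $w$ whose offending $s\in S$ is closest to the boundary and push inward. If $W_w$ is not $S$-good, the non-adjacent $s$ on its rim, flanked on the bad face by two neighbors of $w$, yields either a strictly smaller separation of the same type on which the argument recurses, or a $3$-cut of $G_1$ separating interior vertices, which $4$-connectivity of $G$ forbids unless one side is trivial. Iterating, one reaches either an interior vertex whose incident faces avoid $S$, giving case~$(i)$, or a terminal graph with at most one interior vertex when $|S|=4$ and at most three when $|S|=5$. For $|S|=4$ the degree bound forces the single interior vertex to be adjacent to all four vertices of $S$; as $S$ is independent there is no rim cycle and hence no wheel at all, which is precisely case~$(ii)$ with $|V(G_1)|=5$. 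For $|S|=5$ only finitely many planar graphs survive (independent $S$ on $B$, every interior vertex cofacial with a non-neighbor in $S$, no $S$-good wheel), and checking them gives exactly the graphs of Figure~\ref{obstructions}; the refinement in the $|V(G_1)|=8$ case comes from re-invoking the global hypotheses on $G$, since a boundary vertex of $G_1$-degree $3$ with only one neighbor in $G_2\setminus S$ has $\deg_G=4$, and its neighborhood can then be completed through $G_2$ either to a $K_5$-subdivision or to a $4$-coloring of $G$ by minimality, a contradiction forcing $\deg_G\ge 5$.

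I expect the iteration-and-enumeration step to be the real obstacle: proving that the inward-pushing terminates only at a good wheel or at one of the listed obstructions, i.e.\ that no other ``thin'' planar configuration with an independent boundary set can persist. Controlling the large faces that the independence of $S$ forces, and systematically ruling out configurations by extracting forbidden small cuts of $G$, is where essentially all of the case analysis resides.
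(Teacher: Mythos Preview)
This lemma is not proved in the present paper at all: it is imported from the companion paper \cite{XXYY19} (note the sentence immediately preceding the statement, ``We also need a result from \cite{XXYY19} which characterizes the 4-separations and 5-separations\ldots''). So there is no proof here against which to compare your attempt, and any genuine comparison would require consulting that other manuscript.

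Taken on its own terms, your outline sets up the right framework---the degree bound from $4$-connectivity, the reduction to a $2$-connected $G_1$ with outer cycle $B$, and the reformulation of $S$-goodness are all correct and natural---but it is explicitly a plan rather than a proof, and the parts you flag as hard are indeed where the content lies. Two concrete gaps: first, your ``push inward'' recursion is not made precise; you do not specify what quantity strictly decreases, nor why the smaller separation inherits all the hypotheses (in particular, independence of the new boundary set in the smaller piece is not automatic). Second, you restrict attention to wheels $W_w$ with $w\notin V(B)$, but you do not address the situation where every vertex of $V(G_1)\setminus S$ lies on $B$; several of the small obstructions in Figure~\ref{obstructions} are exactly of this shape, so this case cannot be swept into the recursion. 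Finally, the $|V(G_1)|=8$ refinement asserts a contradiction (``$K_5$-subdivision or $4$-coloring'') without any mechanism; this is the one place where the full Haj\'os hypotheses, not just $4$-connectivity, must be invoked, and your sketch gives no indication of how.
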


Let $G$ be a Haj\'os graph and  $(G_1, G_2)$ be a separation in $G$ such that $|V(G_1\cap G_2)|\in \{4,5\}$ and $(G_1, V(G_1\cap G_2))$ is planar, and assume $W$ is a $V(G_1\cap G_2)$-good wheel in $G_1$.
      We wish to extend  $W$ to a $K_5$-subdivision by adding two disjoint paths, which
      must be routed through the non-planar part $G_2$.
        The following lemma
       provides four paths extending one such good wheel to $V(G_1\cap G_2)$.

    \begin{lem} \label{extension-5cut}
    	Let $G$ be a Haj\'os graph and let $(G_1,G_2)$ be a separation in $G$ with $V(G_1\cap G_2)$ independent in $G_1$ such that
     \begin{itemize}
      \item [$(i)$]  $|V(G_1\cap G_2)|\le 5$, $(G_1, V(G_1\cap G_2))$ is
       planar, and $G_1$ has a $V(G_1\cap G_2)$-good wheel,
      \item [$(ii)$] subject to $(i)$, $G_1$ is minimal, and
      \item [$(iii)$] subject to $(ii)$, $V(G_1\cap G_2)$ is minimal.
      \end{itemize}
 Then any $V(G_1\cap G_2)$-good wheel in $G_1$ is $V(G_1\cap G_2)$-extendable.
\end{lem}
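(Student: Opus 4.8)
The plan is to prove Lemma~\ref{extension-5cut} by contradiction, exploiting the three-tier minimality in conditions $(i)$--$(iii)$ to force enough connectivity and structure that the desired four paths can be routed by a Menger/fan-type argument. So suppose $W$ is a $V(G_1\cap G_2)$-good wheel in $G_1$ that is \emph{not} $V(G_1\cap G_2)$-extendable. Write $A:=V(G_1\cap G_2)$, let $w$ be the center of $W$, and let $C$ be its rim cycle. The goodness hypothesis says $A\cap V(W)\subseteq N_G(w)$, so every vertex of $A$ that lies on the wheel is already a neighbor of $w$; the content of extendability is to produce four internally disjoint paths from $w$ to $A$, each meeting $W$ in exactly one vertex other than $w$ and together covering all of $A$ (or, in the $|A|\le 4$ versions, covering a prescribed $S$). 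The natural tool is a version of Menger's theorem applied in $G_1$ between $w$ and $A$, but routed so that each path leaves the wheel exactly once.

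First I would set up the routing graph. Because $(G_1,A)$ is planar with $A$ on the outer boundary and $W$ is a wheel cofacial with $w$, I would delete from $G_1$ the interior of the wheel except for $w$ and the rim, contract or otherwise mark the rim vertices in $N_G(w)$ as the allowed ``exit'' vertices, and ask for $|A|$ disjoint paths from $w$ to $A$ through $G_1-($interior$)$. The condition ``$|V(P_i-w)\cap V(W)|=1$'' is exactly the statement that after a path leaves the rim it does not return to the wheel, so I want to work in the graph $G_1$ with the wheel's interior edges suppressed and with $w$ joined only to its rim-neighbors; then any $w$--$A$ path automatically meets $W$ in one vertex. The goal becomes: find four internally disjoint $w$--$A$ paths in this modified graph hitting every vertex of $A$. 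This is a rooted-fan / linkage problem, and the obstruction to finding it is a small cut separating $w\cup(\text{rim})$ from $A$.

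Next I would use minimality to rule out such a cut. If there is no system of four disjoint paths covering $A$, then by a suitable Menger argument there is a small separator $X$ in $G_1$ (of size $<|A|$, or separating off some vertices of $A$) lying strictly between the wheel and $A$. I would then slide the separation: replace $(G_1,G_2)$ by the separation whose new boundary is along $X$, obtaining a strictly smaller $G_1'$ that still satisfies $(i)$ --- it is still planar with its boundary on a disc, and crucially it still contains a $V(G_1'\cap G_2')$-good wheel, because $w$ and enough of $C$ survive. This contradicts the minimality of $G_1$ in $(ii)$, or, when $|G_1|$ is preserved but the boundary shrinks, the minimality of $A$ in $(iii)$. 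Lemma~\ref{5cutobs} is what guarantees that the smaller separation is not one of the exceptional obstructions $(ii)$, $(iii)$ and hence genuinely contains a good wheel, so that we really do contradict minimality rather than fall into an exceptional case; the 4-connectivity of $G$ from Lemma~\ref{4conn} is what bounds the possible separators and forbids degenerate cuts of size $\le 3$.

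The main obstacle, as I see it, is the bookkeeping that guarantees the four paths each meet the wheel in exactly one vertex \emph{and} collectively cover all of $A$, rather than merely producing four disjoint paths. A raw Menger argument gives disjoint $w$--$A$ paths but does not by itself control which rim vertices are used or force the paths to be ``non-returning'' to the wheel; handling a vertex of $A$ that also lies on the rim (so it is simultaneously an exit point and a target) is delicate, and is precisely where the goodness hypothesis $A\cap V(W)\subseteq N_G(w)$ must be invoked so that such a vertex can serve as its own one-vertex path. I expect the proof to split into cases according to $|A|\in\{4,5\}$ and according to how many vertices of $A$ lie on the rim, with the $|A|=5$ case requiring the most care because the exceptional configurations of Figure~\ref{obstructions} must be excluded at every slide of the separation.
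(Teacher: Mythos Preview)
Your high-level strategy---apply a Menger-type argument in a modified graph, and when it fails slide to a smaller separation and invoke Lemma~\ref{5cutobs} against minimality---matches the paper's. But there is a genuine gap at the heart of the argument. The correct modification is not to ``suppress the wheel's interior'' (the wheel has no interior beyond the spokes) but to delete $U:=V(W-w)\setminus N_G(w)$, the non-neighbor rim vertices, and then ask for four disjoint paths from $N_G(w)$ to $A$ in $G_1':=G_1-U$. When Menger fails, the obstructing cut $X$ has size $\le 3$, but it is a cut of $G_1'$, \emph{not} of $G_1$: to separate $w$ from $A$ in $G_1$ you must add back some vertices of $U$, and the resulting set $X\cup U'$ may have far more than five vertices. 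Your ``slide the separation'' step therefore does not produce a separation of order $\le 5$, so neither minimality $(ii)$--$(iii)$ nor Lemma~\ref{5cutobs} applies.

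This is exactly where the paper does the real work. It draws a simple closed curve $\gamma$ in the plane through $X\cup U$ enclosing $N_G(w)$, and then proves two structural claims: any two consecutive points of $\gamma\cap U$ lie on the same rim arc $W_i$ between consecutive spokes (otherwise $\{u,v,w\}$ is a $3$-cut in $G$), and in fact only one arc of $\gamma$ between consecutive vertices of $X$ meets $U$ at all. These planarity/$4$-connectivity arguments collapse the a priori large set $X\cup U'$ down to a separation of order $\le 5$ in $G_1$, after which Lemma~\ref{5cutobs} can be applied; even then one must check that the resulting $(L_1,V(L_1\cap L_2))$ is not one of the obstructions in Figure~\ref{obstructions}, which requires a short case analysis. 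Your proposal does not anticipate this difficulty, and without it the proof does not go through.
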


\begin{proof} By our convention,  $G_1$ is drawn in a closed disc in the plane with no edge crossing such that
      $V(G_1\cap G_2)$ is on the boundary of that disc. Let $W$ be a $V(G_1\cap G_2)$-good wheel in $G_1$ with center $w$,
      $U=V(W-w)\setminus N_G(w)$, and  $G_1'=G_1-U$. If $G_1'$ has four disjoint paths from $N_G(w)$ to $V(G_1\cap G_2)$ then extending these paths to
      $w$ (by adding one edge for each path), we see that $W$ is  $V(G_1\cap G_2)$-extendable.
        So we may assume that such four paths do not exist. Then $G_1'$ has a separation
      $(H_1,H_2)$ such that $|V(H_1\cap H_2)|\le 3$, $N_G(w)\cup \{w\}\subseteq V(H_1)$, and $V(G_1\cap G_2)\subseteq V(H_2)$. We choose $(H_1,H_2)$ with $|V(H_1\cap H_2)|$ minimum.

We see that $V(H_1\cap H_2)\cup U$ is a cut in $G_1$ separating $H_1$ from $V(G_1\cap G_2)$.
Thus, by the planarity of $(G_1,V(G_1\cap G_2))$, we can draw  a simple closed curve $\gamma$ in the
      plane such that $\gamma \cap G_1\subseteq V(H_1\cap H_2)\cup U$,  $H_1$
      is inside $\gamma$, and $H_2$ is outside $\gamma$.  We choose such
      $\gamma$ that $|\gamma \cap G_1|$ is minimum.

Note  $V(H_1\cap H_2)\subseteq \gamma$ by the minimality of $|V(H_1\cap H_2)|$. Moreover, $\gamma\cap U\ne \emptyset$ as, otherwise, $V(H_1\cap H_2)$ would be a cut in $G$,
a contradiction as $G$ is 4-connected.

      For convenience, let $N_G(w)=\{w_1,\ldots, w_t\}$, and assume that the notation is chosen so that $w_1, \ldots, w_t$ occur on $W-w$ in clockwise order.
Moreover, for $i\in [t]$, let $W_i$ denote the path in $W-w$ from $w_i$ to $w_{i+1}$ in clockwise order, where $w_{t+1}=w_1$.
     We claim that
      \begin{itemize}
         \item [(1)]  any two vertices of $\gamma \cap U$ consecutive on $\gamma$
     must be contained in the same  $W_i$, for some $i\in [t]$.
      \end{itemize}
For, otherwise, let $u,v\in \gamma \cap U$ be consecutive on $\gamma$ such that $u\in V(W_i)$ and
     $v\in V(W_j)$, with $i<j$. Then we see that $G$ has a separation
     $(L_1,L_2)$ such that $V(L_1\cap L_2)=\{u,v,w\}$, $w_{i+1}\in
     V(L_1-L_2)$, and $G_2\subseteq L_2$. This contradicts the fact that
     $G$ is 4-connected. $\Box$

\medskip

     Note that $k\ge 2$. For, otherwise, it follows from (1) that $\gamma \cap U\subseteq W_i$ for some $i\in [k]$. Choose $u,v\in \gamma\cap U$ with $uW_iv$ maximal.
    Then $\{u,v\}\cup V(H_1\cap H_2)$ is a cut in $G$, a contradiction.

 Let $V(H_1\cap H_2)=\{v_1, \ldots, v_k\}$, where $2\le k\leq 3$, and for $i\in [k]$, let $\gamma_i$ be the curve in $\gamma$
   from $v_i$ to $v_{i+1}$ in clockwise order, where $v_{k+1}=v_1$.
We further claim that

    \begin{itemize}
    \item [(2)] there exist unique $i\in [k]$ and unique
     $j\in [t]$, for which $\gamma_i\cap W_j\ne \emptyset$.
    \end{itemize}
    For, suppose otherwise.
First,  assume that there exist $\gamma_i$ and $\gamma_l$ with $i\ne l$ such that for some $W_j$, $\gamma_i\cap W_j\ne \emptyset$
 and $\gamma_l\cap W_j\ne \emptyset$. Without loss of generality, we may assume $i=1$ and $l=2$. Then, by planarity and by (1),
$U\cup (V(H_1\cap H_2)\setminus \{v_2\})$ is a cut in $G_1$ separating $\{w\}\cup N_G(w)$ from $V(G_1\cap G_2)$; so $G_1'$ has a separation $(H_1',H_2')$ such that $V(H_1'\cap H_2')=V(H_1\cap H_2)\setminus \{v_2\}$,
$N_G(w)\cup \{w\}\subseteq V(H_1')$, and $V(G_1\cap G_2)\subseteq V(H_2')$. This contradicts the choice of $(H_1,H_2)$ that $|V(H_1\cap H_2)|$ is minimum.

Hence, by (1),  there exist $p\neq q$ and $i\neq j$ such that $\gamma_p\cap W_i\ne \emptyset$
    and $\gamma_q\cap W_j\ne \emptyset$. Without loss of generality, we may further assume that $p=1, q=2$, and $i<j$.
       Let $v_2'\in V(W_i)$ such that
     $v_2,v_2'$ are consecutive on  $\gamma_1$, and $v_2''\in V(W_j)$ such that $v_2,v_2''$ are
     consecutive on $\gamma_2$. Then, by (1), $G$ has a 4-separation
     $(L_1,L_2)$ such that $V(L_1\cap L_2)=\{v_2, v_2',v_2'',w\}$ is independent in $L_1$,
     $\{w_{i+1},\ldots, w_j\}\subseteq V(L_1)$, and $G_2\subseteq L_2$.  If $|V(L_1)|\ge 6$ then, by Lemma~\ref{5cutobs}, $L_1$ has a
     $V(L_1\cap L_2)$-good wheel; so $(L_1,L_2)$ contradicts the choice of $(G_1,G_2)$. Hence,  $|V(L_1)|= 5$ and $j=i+1$.

We may assume $k=3$. For, suppose $k=2$. Let $v'\in V(W_i)$ such that $v_1, v'$ are consecutive on $\gamma_1$, and let $v''\in V(W_{i+1})$ such that $v_1, v''$ are consecutive on $\gamma_2$. By (1),  $G$ has a 4-separation
$(L_1',L_2')$ such that $V(L_1'\cap L_2')=\{v_1, v',v'',w_{i+1}\}$ is independent in $L_1'$,
$\{w\}\cup ( N_G(w)\setminus \{w_{i+1}\}) \subseteq V(L_1')$, and $G_2\subseteq L_2'$.  Since $|V(L_1')|\ge 6$, it follows from
Lemma~\ref{5cutobs} that $L_1'$ contains a $V(L_1'\cap L_2')$-good wheel. So $(L_1',L_2')$ contradicts the choice of $(G_1, G_2)$.

Now let $v_1'\in V(W_i)$ such that $v_1, v_1'$ are consecutive on $\gamma_1$, and let $v_3'\in V(W_{i+1})$ such that $v_3, v_3'$ are consecutive on $\gamma_2$.

Suppose $\gamma_3\cap U=\emptyset$.  Then $v_1\ne w_1$ or $v_3\ne w_3$;
otherwise,  $\{v_1,v_3,w_{i+1}\}$ would be a 3-cut in $G$. If $v_1=w_1$ then by (1), $G$ has a separation $(L_1', L_2')$ such that $V(L_1'\cap L_2')=\{v_1, v_3, v_3', w_{i+1}\}$ is independent in $L_1'$,
$\{w\}\cup (N_G(w)\setminus \{w_{i+1}\})\subseteq V(L_1')$, and $G_2\subseteq L_2'$; so by  Lemma~\ref{5cutobs}, $L_1'$ has a $V(L_1'\cap L_2')$-good wheel and, hence, $(L_1',L_2')$ contradicts the choice of $(G_1,G_2)$.
So $v_1\ne w_1$. Similarly, $v_3\ne w_3$.
Then by (1), $G$ has a separation $(L_1', L_2')$ such that $V(L_1'\cap L_2')=\{v_1, v_1', v_3, v_3', w_{i+1}\}$ is independent in $L_1'$,
$\{w\}\cup (N_G(w)\setminus \{w_{i+1}\})\subseteq V(L_1')$,  $G_2\subseteq L_2'$, and $|V(L_1')\setminus V(L_2')|\ge 4$. By the choice of $(G_1,G_2)$,
$L_1'$ does not admit a $V(L_1'\cap L_2')$-good wheel. So
by Lemma~\ref{5cutobs}, $|V(L_1'-L_2')|=4$ and
$(L_1', V(L_1'\cap L_2'))$ is the 9-vertex graph in Figure~\ref{obstructions}, which means that
the only neighbor of $w_{i+1}$ in $L_1'$, namely $w$, should have degree 6 in $L_1'$ and must be adjacent to $v_1'$ and $v_3'$. But this is a contradiction as $v_1',v_3'\notin N_G(w)$.

So $\gamma_3\cap U\neq\emptyset$. But then by (1) and 4-connectedness of $G$, there exist $l\in \{1, 3\}$ and vertex $v_l''$ such that $v_l',v_l,v_l''$
are consecutive on $\gamma$ in order listed
and $G$ has a 4-separation $(L_1',L_2')$ with $V(L_1'\cap L_2')=\{w,v_l',v_l,v_l''\}$ independent in
$L_1'$, $G_2\subseteq L_2'$, and $|V(L_1')|\ge 6$.  Then by Lemma~\ref{5cutobs}, $L_1'$ contains a $V(L_1'\cap L_2')$-good wheel. Hence $(L_1',L_2')$ contradicts the choice of $(G_1, G_2)$.
$\Box$

\medskip

     Thus, by (1) and (2), we may assume that $\gamma_1\cap W_1\ne \emptyset$ and, for $i\in [k]\setminus \{1\}$ and $j\in [t]\setminus \{1\}$,
      $\gamma_i\cap W_j=\emptyset$. Let $v_1',v_2'\in V(W_1)$ such that, for $i=1,2$,  $v_i$ and $v_i'$ are consecutive on $\gamma_1$.
    Then $G$ has a  separation $(L_1,L_2)$ such that $V(L_1\cap L_2)=\{v_i : i\in
     [k]\}\cup \{v_1',v_2'\}$ is independent in $L_1$, $N_G(w)\cup \{w\}\subseteq V(L_1)$,
     and $G_2\subseteq L_2$.  Note that $w\in V(L_1)\setminus V(L_2)$.   Also note that  $v_1\ne w_1$ or $v_2\ne w_2$; otherwise, $V(L_1\cap L_2)$ would be a 3-cut in $G$.
    If $v_1=w_1$ or $v_2=w_2$ then $|V(L_1\cap L_2)|=4$ and $|V(L_1)\setminus V(L_2)|\ge 6$; hence, by Lemma~\ref{5cutobs}, $L_1$ has a $V(L_1\cap L_2)$-good wheel and, hence, $(L_1, L_2)$ contradicts the choice of $(G_1,G_2)$.
     So $v_1\ne w_1$ and $v_2\ne w_2$. Hence $|V(L_1)|\ge 9$ and
     $w$ is not adjacent to $\{v_1,v_2,v_1',v_2'\}$. It follows from Lemma~\ref{5cutobs} that $L_1$ contains a $V(L_1\cap L_2)$-good wheel.
     Hence $(L_1,L_2)$ contradicts the choice of $(G_1, G_2)$.
\end{proof}

   To extend a wheel to a $K_5$-subdivision, we need the following weaker version of a result of Seymour \cite{Se80}, with equivalent forms
  proved in \cite{CR79,Sh80, Th80}. For a graph $G$ and vertices $v_1, v_2,\ldots, v_n$  of $G$, we say that  $(G, v_1, v_2,\ldots, v_n)$ is planar
  if  $G$ can be drawn in a closed disc in the plane with no edge crossings such that  $v_1, v_2, \cdots, v_n$ occur on the boundary of the disc
  in clockwise order.

   \begin{lem}\label{2link}
   Let $G$ be a graph and $s_1,s_2,t_1,t_2\in V(G)$ be distinct such that, for any $S\subseteq V(G)$ with $|S|\le 3$, every component of $G-S$
   must contain a vertex from $\{s_1,s_2,t_1,t_2\}$. Then either $G$ contains disjoint paths from $s_1,s_2$ to $t_1,t_2$, respectively, or
   $(G,s_1,s_2,t_1,t_2)$ is planar.
   \end{lem}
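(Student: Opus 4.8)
The plan is to prove the contrapositive of the substantive direction: assuming $G$ has no disjoint paths linking $s_1$ to $t_1$ and $s_2$ to $t_2$, I will show that $(G,s_1,s_2,t_1,t_2)$ is planar. This suffices, since establishing ``not linkable $\Rightarrow$ planar'' yields the stated disjunction. (As a sanity check, the two outcomes are genuinely complementary: in a drawing in a disc with $s_1,s_2,t_1,t_2$ in this clockwise cyclic order on the boundary, the arcs $s_1t_1$ and $s_2t_2$ separate the disc in interleaved fashion, so no such linkage can exist.)

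To carry this out I would invoke the full 2-linkage theorem of Seymour \cite{Se80} (equivalently \cite{CR79,Sh80,Th80}) in its reduction form. Recall that a graph $H$ with terminals $s_1,s_2,t_1,t_2$ is \emph{reducible} if there is a set $X\subseteq V(H)$ with $|X|\le 3$ such that some component of $H-X$ contains no terminal; the reduction deletes such a component and turns $X$ into a clique, and a standard rerouting argument shows this operation preserves whether or not the $(s_1,t_1),(s_2,t_2)$-linkage exists. The theorem then asserts that an \emph{irreducible} graph admits the linkage if and only if it is \emph{not} planar with $s_1,s_2,t_1,t_2$ on a common face in the cyclic order $s_1,s_2,t_1,t_2$.

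The key observation is that the hypothesis of the lemma---every component of $G-S$ contains a terminal whenever $|S|\le 3$---is precisely the statement that $G$ is irreducible: no small cut ever exposes a terminal-free component, so no reduction is applicable and, in particular, no edges are ever added. Hence $G$ is its own irreducible kernel and the theorem applies verbatim. Since by assumption the linkage fails, $G$ must be planar with $s_1,s_2,t_1,t_2$ cofacial in the cyclic order $s_1,s_2,t_1,t_2$; taking that face as the outer boundary of a closed disc gives exactly a drawing witnessing that $(G,s_1,s_2,t_1,t_2)$ is planar in the sense defined above, completing the argument.

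The step I expect to require the most care is matching the cyclic order: I must confirm that the obstruction order supplied by the 2-linkage theorem is the interleaved order $s_1,s_2,t_1,t_2$---the order for which the $(s_1,t_1),(s_2,t_2)$-linkage is impossible inside a disc---and that ``on a common face in clockwise order'' translates correctly into the paper's definition of $(G,s_1,s_2,t_1,t_2)$ being planar. As an alternative to citing the reduction form, one could give a direct induction on $|V(G)|$: use the connectivity hypothesis to rule out nontrivial $\le 3$-separations and thereby reduce to the case that $G$ is $4$-connected, then settle the $4$-connected base case directly, namely that a $4$-connected graph has the linkage unless it is planar with the four terminals on a face in the bad order. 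In that route the genuinely hard part is the $4$-connected base case, which is the essential content of the 2-linkage theorem and is most efficiently imported from \cite{Se80,Th80}.
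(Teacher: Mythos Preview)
Your proposal is correct and matches the paper's treatment: the paper does not prove this lemma at all but simply cites it as a weaker version of Seymour's 2-linkage theorem \cite{Se80} (with equivalent forms in \cite{CR79,Sh80,Th80}). Your observation that the hypothesis is exactly irreducibility, so that Seymour's theorem applies verbatim with no reduction step needed, is the right way to see why this formulation follows immediately from the cited results.
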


   The next result shows that in a Haj\'{o}s graph, we cannot extend
   a wheel in certain way.

  \begin{lem}\label{nogoodwheel}
  Let $G$ be a Haj\'os graph. Suppose there exists a $4$-separation $(G_1,G_2)$ in $G$ such that $(G_1, V(G_1\cap G_2))$ is
       planar. If $W$ is a $V(G_1\cap G_2)$-good wheel in $G_1$ then $W$ is not $V(G_1\cap G_2)$-extendable.
  \end{lem}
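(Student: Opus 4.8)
The plan is to argue by contradiction. Suppose $W$, with center $w$ and rim cycle $C$, is $V(G_1\cap G_2)$-extendable; I will combine the four extension paths with a $2$-linkage in $G_2$ to produce a $K_5$-subdivision, contradicting clause~(1) of the definition of a Haj\'os graph. Write $V(G_1\cap G_2)=\{a_1,a_2,a_3,a_4\}$. Extendability (with $S=V(G_1\cap G_2)$) yields paths $P_1,\dots,P_4$ from $w$ meeting pairwise only in $w$; since two paths sharing only $w$ cannot end at the same $a_i$, the four endpoints are exactly $a_1,\dots,a_4$, and I relabel so $P_i$ runs from $w$ to $a_i$. As each $P_i$ meets $W$ only in $w$ and one further vertex, its second vertex $x_i\in N_G(w)\cap V(C)$ is a spoke-neighbour of $w$, and $Q_i:=P_i-w$ is a path from $x_i$ to $a_i$ meeting $W$ only at $x_i$; since $P_i$ meets $V(G_1\cap G_2)$ only in its endpoint $a_i$, we have $P_i\subseteq G_1$, and the $Q_i$ are pairwise disjoint (the degenerate case $a_i=x_i$, where $P_i$ is the single spoke $wa_i$, causes no difficulty). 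Because $(G_1,V(G_1\cap G_2))$ is planar with $w$ interior to $C$, the spider $P_1\cup\cdots\cup P_4$ forces the cyclic order of $x_1,x_2,x_3,x_4$ on $C$ to agree with that of $a_1,a_2,a_3,a_4$ on the disc boundary; relabel so both read $1,2,3,4$.

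Next I would route the two ``diagonal'' connections through $G_2$ using Lemma~\ref{2link} with $s_1=a_1$, $s_2=a_2$, $t_1=a_3$, $t_2=a_4$. Its hypothesis holds: if some $S\subseteq V(G_2)$ with $|S|\le 3$ left a component $D$ of $G_2-S$ missing every $a_i$, then $D\subseteq V(G_2)\setminus V(G_1)$ and $N_G(D)\subseteq S$, so $S$ would be a cut of order at most $3$ in $G$ separating $D$ from $w$, contradicting Lemma~\ref{4conn}. Hence either (a) $G_2$ contains disjoint paths $R_{13}$ from $a_1$ to $a_3$ and $R_{24}$ from $a_2$ to $a_4$, or (b) $(G_2,a_1,a_2,a_3,a_4)$ is planar.

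In case~(a) I would assemble a $K_5$-subdivision with branch vertices $w,x_1,x_2,x_3,x_4$: the four spokes $wx_i$, the four rim-arcs $x_iCx_{i+1}$ (indices modulo $4$), and the two diagonals $Q_1\cup R_{13}\cup Q_3$ (from $x_1$ to $x_3$) and $Q_2\cup R_{24}\cup Q_4$ (from $x_2$ to $x_4$). These ten paths are internally disjoint: the spokes and arcs lie on $W$, each $Q_i$ meets $W$ only at $x_i$ and the $Q_i$ are pairwise disjoint, while $R_{13},R_{24}\subseteq G_2$ meet $G_1$ only in $\{a_1,a_3\}$ and $\{a_2,a_4\}$; the matching cyclic orders ensure that $R_{13},R_{24}$ supply exactly the two pairs $x_1x_3,x_2x_4$ absent from the wheel. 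This $K_5$-subdivision contradicts clause~(1). In case~(b), since $(G_1,V(G_1\cap G_2))$ and $(G_2,a_1,a_2,a_3,a_4)$ are both planar with $a_1,a_2,a_3,a_4$ in the same boundary cyclic order, identifying the two disc boundaries embeds $G=G_1\cup G_2$ in the sphere; thus $G$ is planar and hence $4$-colorable by the Four Color Theorem, contradicting clause~(2). Either way we reach a contradiction, so $W$ is not $V(G_1\cap G_2)$-extendable.

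The setup of the paths and labels is routine; the step requiring the most care is the bookkeeping that certifies a genuine $K_5$-subdivision -- aligning the two cyclic orders so that the $G_2$-linkage realizes precisely the two pairs missing from the wheel, and verifying internal disjointness through the degenerate cases $a_i=x_i$. I also rely on feeding the correct order to Lemma~\ref{2link}, so that its planar alternative in case~(b) is exactly the one compatible with the embedding of $G_1$, which is what makes the spherical gluing valid.
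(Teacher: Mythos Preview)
Your proof is correct and follows essentially the same route as the paper's: assume extendability, label so that $(G_1,a_1,a_2,a_3,a_4)$ is planar, apply Lemma~\ref{2link} to $G_2$, and in one branch glue the two discs to conclude $G$ is planar (hence $4$-colorable), while in the other branch combine $W$, the four extension paths, and the two $G_2$-paths into a $K_5$-subdivision. You simply spell out several points the paper leaves implicit---notably the verification of the small-cut hypothesis of Lemma~\ref{2link} via Lemma~\ref{4conn}, and the cyclic-order alignment that makes the $K_5$ bookkeeping work---but the underlying argument is the same.
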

\begin{proof}

For, suppose $W$ is $V(G_1\cap G_2)$-extendable. Let $V(G_1\cap G_2)=\{t_1, t_2, t_3, t_4\}$, and assume that the notation is chosen so that
      $(G_1,t_1,t_2,t_3,t_4)$ is planar.  Then there exist four paths $P_1, P_2, P_3, P_4$  in $G_1$ from $w$ to $t_1, t_2, t_3, t_4$, respectively,
      such that  $V(P_i\cap P_j)=\{w\}$ for any distinct $i,j\in [4]$ and $|V(P_i\cap W)|=2$ for $i\in [4]$.

  If $(G_2,t_1,t_2,t_3,t_4)$ is planar then $G$ is planar and, hence, 4-colorable, a contradiction. So $(G_2,t_1,t_2,t_3,t_4)$ is not planar.
 Then, by Lemma~\ref{2link},  $G_2$ has disjoint paths $Q_1,Q_2$ from $t_1,t_2$ to $t_3,t_4$, respectively. But then
$W\cup P_1\cup P_2\cup P_3 \cup P_4\cup Q_1\cup Q_2$ is a $K_5$-subdivision in $G$, a contradiction.
 \end{proof}

\section{Extending paths from 5-cuts to 4-cuts}

  The goal of this section is to describe the situations where a
    good wheel cannot be extended from a 5-cut to a 4-cut in the desired way. We achieve this goal in three steps (formulated as lemmas), by gradually
reducing the number of possibilities.  The first lemma has four possibilities.

\begin{lem}\label{extension-4cut}
Suppose $G$ is a Haj\'{o}s graph and $(G_1,G_2)$ is a $4$-separation in $G$ such that
 $(G_1, V(G_1\cap G_2))$ is planar and  $|V(G_1)|\ge 6$, and, subject to this, $G_1$ is minimal.
 Moreover, suppose that $G_1$ has a 5-separation $(H,L)$ with  $V(G_1\cap G_2)\subseteq V(L)$ and $V(H\cap L)$ independent in $H$, such that
  \begin{itemize}
   \item [$(a)$] $V(G_1\cap G_2) \not\subseteq V(H\cap L)$ and
$H$ has a $V(H\cap L)$-good wheel,
   \item [$(b)$] subject to $(a)$, $|S|$ is minimum, where $S=V(H\cap
     L)\cap V(G_1\cap G_2)$, and
   \item  [$(c)$] subject $(b)$, $H$ is minimal.
  \end{itemize}
Then  $H$ has  a $(V(H\cap L),S)$-extendable wheel, or $G_1$ has a $V(G_1\cap G_2)$-extendable wheel,
or, for each $V(H\cap L)$-good wheel $W$ with center $w$ in $H$, one of the following holds:
             \begin{itemize}
               \item [$(i)$] There exist $s\in S\setminus V(W)$ and $a,
                 b\in V(W-w)\setminus N_G(w)$ such that
                 $N_H(s)=\{a,b\}$ and either $a=b$ or  $ab\in E(W)$.
               \item [$(ii)$] There exist $s_1,s_2\in S\setminus V(W)$,
                 $a, b\in V(W-w)\setminus N_G(w)$, and separation
                 $(H_1,H_2)$ in $H$ such that
                 $V(H_1\cap H_2)=\{a,b,w\}$, $\{s_1, s_2\}\subseteq V(H_1)$, $|N_G(w)\cap V(H_1)|=1$,
                 and $V(H\cap L)\setminus \{s_1, s_2\}\subseteq V(H_2)$.
           \item [$(iii)$] $|S|=3$, and
              there exist $s_1,s_2\in S$,
              $a,b\in V(W-w)\setminus N_G(w)$, and
               separation $(H_1,H_2)$ in $H$ such that
                        $V(H_1\cap H_2)=\{a,b,s_1,s_2\}$,
                          $S\subseteq
                          V(H_1)$, and $\{w\}\cup (V(H\cap L)\setminus
                         S)\subseteq V(H_2)$.
            \item [$(iv)$] There exist $a,b\in V(W-w)\setminus N_G(w)$, $c\in V(H)\setminus V(W)$,
                      and a separation $(H_1,H_2)$ in $H$ such that $V(H_1\cap H_2)=\{a,b,c\}$, $(N_G(w)\cup \{w\})\cap V(H_1)=\emptyset$,
                  $|V(H_1)\cap V(H\cap L)|=2$, $V(H_1)\cap V(H\cap L)\subseteq S$, and $N_G(w)\cup \{w\}\subseteq V(H_2)\setminus V(H_1)$.
             \end{itemize}
\end{lem}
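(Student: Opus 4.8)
The plan is to adapt the argument of Lemma~\ref{extension-5cut}, the difference being that we now only need our four disjoint paths to cover $S$ rather than all of $V(H\cap L)$, and the several ways in which this covering can fail will be organized into the outcomes $(i)$--$(iv)$. Since the conclusion is a disjunction, I would assume the first two outcomes fail, i.e. no $V(H\cap L)$-good wheel in $H$ is $(V(H\cap L),S)$-extendable and $G_1$ has no $V(G_1\cap G_2)$-extendable wheel, and show that every $V(H\cap L)$-good wheel then satisfies one of $(i)$--$(iv)$. So fix a $V(H\cap L)$-good wheel $W$ in $H$ with center $w$, rim $C=W-w$, and spokes $N_G(w)=\{w_1,\dots,w_t\}$ in clockwise order, where $t\ge 4$ since $G$ is $4$-connected by Lemma~\ref{4conn}. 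Put $U=V(C)\setminus N_G(w)$ and $H_0=H-U-w$. Because each extending path must meet $V(W)$ in exactly one vertex other than $w$, namely a spoke, the wheel $W$ is $(V(H\cap L),S)$-extendable if and only if $H_0$ contains four disjoint paths from four distinct spokes to $V(H\cap L)$, internally disjoint from $N_G(w)$, whose terminal vertices cover $S$; attaching $w$ to each by its spoke edge recovers $P_1,\dots,P_4$. By assumption no such covering linkage exists.

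I would first turn the covering requirement into ordinary linkage. Since $|V(H\cap L)|=5$ and, by hypothesis $(a)$, $|S|\le 3$, any four disjoint paths from $N_G(w)$ to $V(H\cap L)$ hit four of the five targets and miss exactly one, so they cover $S$ precisely when the missed target lies in $V(H\cap L)\setminus S$. Hence $W$ is $(V(H\cap L),S)$-extendable if and only if for some $d\in V(H\cap L)\setminus S$ the graph $H_0$ has four disjoint paths from $N_G(w)$ to the four-element set $V(H\cap L)\setminus\{d\}$. Failure of extendability therefore produces, by Menger's theorem, for every $d\in V(H\cap L)\setminus S$ a cut $X_d$ of order at most $3$ in $H_0$ separating $N_G(w)$ from $V(H\cap L)\setminus\{d\}$ (and, in the extreme case, a single $\le 3$-cut separating $N_G(w)$ from all of $V(H\cap L)$). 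These cuts are the raw material for the case analysis.

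For a fixed such cut I would run the planar curve argument of Lemma~\ref{extension-5cut}. The set $X_d\cup U$ separates $\{w\}\cup N_G(w)$ from the targets in $G_1$, so by planarity of $(G_1,V(G_1\cap G_2))$ one may enclose the $w$-side by a simple closed curve $\gamma$ meeting $G_1$ only in $X_d\cup U$, chosen with $|\gamma\cap G_1|$ minimum; as there, $4$-connectivity of $G$ forces $\gamma\cap U\neq\emptyset$. The analogues of claims $(1)$ and $(2)$ of that proof then hold: any two vertices of $\gamma\cap U$ consecutive on $\gamma$ lie on a common rim segment $W_i$, and $\gamma$ meets the rim in essentially one place. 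Every way of violating this produces, through $4$-connectivity and Lemma~\ref{5cutobs}, a separation $(L_1,L_2)$ of $G$ with an independent $4$- or $5$-cut, $|V(L_1)|\ge 6$, and $G_2\subseteq L_2$, hence a $V(L_1\cap L_2)$-good wheel; such a separation either contradicts the minimality of $G_1$ (or of $S$, by $(b)$, or of $H$, by $(c)$), or, via Lemma~\ref{extension-5cut}, yields a $V(G_1\cap G_2)$-extendable wheel, which is the excluded second outcome. Thus in the surviving cases the obstructing cut consists of two non-spoke rim vertices $a,b$ (with $a=b$ or $ab\in E(W)$, by the consecutivity from claim $(1)$) together with at most one further vertex.

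Finally I would read off $(i)$--$(iv)$ according to how the minimal cut sits against $S$ and the rim: two consecutive non-spoke rim vertices $a,b$ with a single $S$-vertex $s\notin V(W)$ buried behind them, so $N_H(s)=\{a,b\}$, give $(i)$; the cut $\{a,b,w\}$ trapping exactly one spoke and two $S$-vertices gives $(ii)$; the cut $\{a,b,c\}$ with $c\notin V(W)$, no spoke, and two $S$-targets on the far side gives $(iv)$; and the four-element cut $\{a,b,s_1,s_2\}$, which by minimality of $S$ forces $|S|=3$, gives $(iii)$. The main obstacle, and the genuinely new point beyond Lemma~\ref{extension-5cut}, is organizing the collection of cuts $X_d$ (one per non-$S$ target) into a single controlled separation, and in particular distinguishing $(ii)$ from $(iv)$: whether the third cut vertex is the center $w$ (so that a spoke is trapped on the target side) or an ordinary vertex $c$ (so that none is) depends on tracking the position of $N_G(w)$ relative to $\gamma$, and it is here, together with the use of hypotheses $(b)$ and $(c)$ to pin down $S$ and $H$, that the argument is most delicate.
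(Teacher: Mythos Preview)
Your reduction to Menger cuts is not the route the paper takes, and it has a real gap. The paper's proof does \emph{not} rerun the curve argument of Lemma~\ref{extension-5cut}; instead it \emph{invokes} that lemma to obtain, once and for all, four paths $P_1,\dots,P_4$ from $w$ through distinct spokes $w_1,\dots,w_4$ to four of the five targets $t_1,\dots,t_4$, and then observes that the missed target $t_5$ must lie in $S$ (else $W$ is already $(V(H\cap L),S)$-extendable). The entire proof is a geometric case analysis of how $t_5$ connects, inside the planar $H$, to the configuration $P_1\cup P_4\cup w_4Fw_1$: whether $t_5$ has a path to $P_1$, to $P_4$, to both, or only to the rim segment $w_4Fw_1$, and how the outer walk $D$ of $H$ meets the rim segments $w_iFw_{i+1}$. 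Each outcome $(i)$--$(iv)$ is read off from a specific cofacial/intersection pattern (e.g.\ $(i)$ when $t_5$ sees only a short rim arc $aFb$, $(iii)$ when both $t_1,t_4\in S$ and the ``cofacial with $t_4$/$t_1$'' sets $T_1,T_4$ are both nonempty, $(ii)$ and $(iv)$ in Subcase~2.3 and Subcase~2.2 after rerouting), and most of the work is ruling out the remaining configurations by producing a strictly smaller $(H',L')$ contradicting $(b)$ or $(c)$, or a $(G_1',G_2')$ contradicting the minimality of $G_1$, via Lemma~\ref{5cutobs}.

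Your approach has two concrete problems. First, the ``main obstacle'' you name --- combining the separate Menger cuts $X_d$, one per $d\in V(H\cap L)\setminus S$, into a single controlled separation --- is not just delicate, it is the whole content of the lemma, and you give no mechanism for it; with $|V(H\cap L)\setminus S|\in\{2,3,4\}$ the curves $\gamma_d$ can differ, and nothing in the Lemma~\ref{extension-5cut} argument tells you how to merge them. Second, your identification of which cut yields which outcome does not type-check: for $(ii)$ the separator is $\{a,b,w\}$, but $w\notin H_0$, so this cannot be a Menger cut in $H_0$ augmented by rim vertices; and for $(iii)$ the separator $\{a,b,s_1,s_2\}$ contains two vertices of $S\subseteq V(H\cap L)$, which are targets, not cut vertices from $X_d$. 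In the paper these separations arise not from Menger but from explicit cofacial relations between the outer walk $D$, the rim $F$, and the paths $P_i$, and the argument that the resulting $5$-separations do \emph{not} carry a $V(H'\cap L')$-good wheel (hence land in Figure~\ref{obstructions}) is what forces the structure. You should start from the four paths, not from cuts.
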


  \begin{proof}
  Note that $|S|\le 3$ as $V(G_1\cap G_2)\not\subseteq V(H\cap L)$. We may assume that $G_1$ is drawn in a closed disc in the plane with no edge crossing
  such that $V(G_1\cap G_2)$ is on the
  boundary of that disc.
  For convenience, let $V(H\cap  L)=\{t_i : i\in [5]\}$ such that $(H, t_1,t_2,t_3,t_4, t_5)$ is planar.
    Let $D$ denote the outer walk of $H$.
Let $W$ be a $V(H\cap L)$-good wheel in $H$ with center $w$, and let $F=W-w$ (which is a cycle).

  By Lemma~\ref{extension-5cut}, $W$ is $V(H\cap L)$-extendable in $H$.
  Without loss of generality, assume that $H$ has four paths $P_1,P_2,P_3,P_4$ from $w$ to $t_1,t_2,t_3,t_4$,
  respectively, such that $|V(P_i\cap F)|=1$ and $t_5\notin V(P_i)$
  for $i\in [4]$.  Moreover, we may assume $t_5\in S$ as, otherwise, these paths show that $W$ is $(V(H\cap L),S)$-extendable. Then
  $t_5\notin V(W)$; for, if $t_5\in V(W)$ then $t_5w\in E(H)$ (as $W$
  is $V(H\cap L)$-good)  which, combined with three of $\{P_1,P_2,P_3,P_4\}$, shows that
    $W$ is $(V(H\cap L),S)$-extendable.
  Let $V(P_i\cap F)=\{w_i\}$ for $i\in [4]$. Since $(H, t_1,t_2,t_3,t_4, t_5)$ is planar,   $w_1,w_2,w_3,w_4$ occur on $F$ in clockwise order.

   We choose $P_1,P_4$ so that $w_4Fw_1$ is minimal. Then
      $$N_G(w)\cap V(w_4Fw_1-\{w_1,w_4\})=\emptyset.$$
   For, suppose not and let $w'\in N_G(w)\cap V(w_4Fw_1-\{w_1,w_4\})$. Since $G$ is 4-connected and $(H, t_1, t_2, t_3, t_4,t_5)$ is planar, $H$ must contain a
   path $P$ from $w'$ to $(P_4-\{w,w_4\})\cup (P_1-\{w,w_1\})\cup \{t_5\}$ and internally disjoint from
   $P_4\cup P_1\cup F$. If $P$ ends at $t_5$ then $P$ and three of $\{P_1,P_2,P_3,P_4\}$ show that $W$ is $(V(H\cap L),S)$-extendable.
   So by symmetry we may assume $P$ ends at $P_4-\{w,w_4\}$. Then replacing $P_4$ with the path in $P\cup (P_4-w_4)\cup ww'$ from $w$ to
   $t_4$, we obtain a contradiction to the minimality of $w_4Fw_1$.

  Note that $H$ has a path $R$ from $t_5$
       to $(P_4\cup w_4Fw_1\cup P_1)\setminus \{t_1, t_4\}$
       and internally disjoint from $P_1\cup P_4\cup w_4Fw_1$. For otherwise,
   $G$ has a 4-separation $(G_1',G_2')$ such that $V(G_1'\cap G_2')=\{t_1,t_2,t_3,t_4\}$, $W\subseteq G_1'$, and $G_2+t_5\subseteq G_2'$.
   By the choice of $(G_1,G_2)$, $|V(G_1')|=5$. This implies that $w_i=t_i$ for $i\in [4]$ and, hence, $t_1t_2\in E(H)$, a contradiction.

  We may assume that $H$ has a path from $t_5$ to $P_1\cup P_4$ and internally disjoint from $P_1\cup P_4\cup F$.
For, suppose not. Then, by planarity, there exist $a, b\in V(w_4Fw_1-\{w_1,w_4\})$ (not necessarily
distinct) such that $w_4,a,b,w_1$ occur on $F$ in clockwise order and all paths in $H$ from
$t_5$ to $P_1\cup P_4\cup (w_4Fw_1-aFb)$ must intersect $aFb$ first.
We choose $a, b$ so that $aFb$ is minimal.  Since $N_G(w)\cap V(w_4Fw_1-\{w_1,w_4\})=\emptyset$, $H$ has a separation
  $(H_1,H_2)$ such that $V(H_1\cap H_2)=\{a,b,t_5\}$, $aFb+t_5\subseteq
  H_1$, and $bFa+\{t_1,t_2,t_3,t_4\}\subseteq H_2$. Thus $V(H_1)=\{a,b,t_5\}$ as $G$ is 4-connected. Now, by the existence of $R$, $(i)$ holds with
$s:=t_5$.

   \medskip

  {\it Case} 1.  $H$ has paths from $t_5$ to both $P_1$ and $P_4$ and
  internally disjoint from $P_1\cup P_4\cup F$.

       Then $t_1,t_4\in S$ as otherwise we may reroute $P_1$ or $P_4$
       to $t_5$;  and the new path, $P_2$ and $P_3$, and $P_4$ or $P_1$ show that $W$ is $(V(H\cap L),S)$-extendable. So
       $S=\{t_1,t_4,t_5\}$. Let $v\in V(G_1\cap
       G_2)\setminus S$.

    We further choose $P_1,P_4$ so that, subject to the minimality of $w_4Fw_1$, the subgraph $K$ of $H$ contained in the closed region bounded by $(P_1-w)\cup (P_4-w)\cup t_4Dt_1\cup w_4Fw_1$
      is maximal. Then, every vertex in $V(P_1)\setminus
      \{w,w_1,t_1\}$ is cofacial with some vertex in $V(w_1Fw_2-w_1)\cup V(P_2-w)$; and
           every vertex in $V(P_4)\setminus \{w,w_4,t_4\}$  is cofacial with some vertex in $V(w_3Fw_4-w_4)\cup V(P_3-w)$.
   Let $T_1:=\{x\in  V(P_1\cup w_4Fw_1)\setminus\{t_1, w,w_4\}: x \mbox{ is cofacial with
            $t_4$}\}$ and $T_4:=\{x\in  V(P_4\cup w_4Fw_1)\setminus \{t_4,w, w_1\} : x \mbox{ is cofacial with $t_1$}\}$.
   Note that $t_1\notin T_4$ and $t_4\notin T_1$ by the existence of the path $R$.

We may assume  that $T_1=\emptyset$ or $T_4=\emptyset$. For otherwise, suppose $T_1\neq \emptyset$ and $T_4\neq \emptyset$.
Then let $a\in T_1$ and $b\in T_4$.  Now, by the existence of the path $R$, the vertices $w_4,a, b,w_1$ occur on $F$ in clockwise order.
Since $N_G(w)\cap V(w_4Fw_1-\{w_1,w_4\})=\emptyset$, $H$ has a separation $(H_1,H_2)$ such that $V(H_1\cap H_2)=\{t_1,t_4,a,b\}$,
$S\subseteq  V(H_1)$, and
   $\{w\}\cup (V(H\cap L)\setminus S)\subseteq V(H_2)$;
   hence, we have $(iii)$ with $s_1=t_1$ and $s_2=t_4$.

      We may also assume that if  $L-t_1$ has disjoint paths $R_3,R_2$ from $t_3,t_2$ to $t_4,v$, respectively,
         then $T_1\ne \emptyset$; for, otherwise,  $K-t_4$ contains a path $P$
    from $w_4$ to $t_5$ and internally disjoint from $w_4Fw_1\cup P_1$, and the paths $P_1,P_2\cup R_2, P_3\cup R_3,P\cup ww_4$ show that
    $W$ is $V(G_1\cap G_2)$-extendable in $G_1$.
    Similarly, we may assume that if $L-t_4$ has disjoint paths $Q_3,Q_2$ from  $t_3,t_2$ to $v,t_1$, respectively, then
    $T_4\ne \emptyset$.

Hence, since $T_1=\emptyset$ or $T_4=\emptyset$, $R_2, R_3$ do not
exist or $Q_2, Q_3$ do not exist.

   \medskip

    {\it Subcase} 1.1. $Q_2,Q_3$ or $R_2,R_3$ exist.

        Without loss of generality, assume that $R_2, R_3$ exist, and
        $Q_2,Q_3$ do not exist. Then $T_1\neq \emptyset$ and $T_4=\emptyset$. Since $Q_2,Q_3$ do not exist,
it follows from the choice of $(G_1,G_2)$ (minimality of $G_1$) that $v$ and $t_1Dt_2$ are cofacial in $G_1$.
Since  $T_4=\emptyset$,  there exists a path $P$ in $K-\{t_1, t_4\}$
from $w_1$ to $t_5$ and internally disjoint from $w_4Fw_1\cup P_4$. We choose
$P$ so that the subgraph $K'$ of $K$ in the closed region bounded by $P_1\cup P\cup t_5Dt_1$ is
maximal.

We may assume that  there exists a vertex $t\in V(t_1Dt_2-t_1)\cap V(P\cup
(w_1Fw_2-w_2))$. For, suppose not.
Then let $P_2'$ be a path in $P_2\cup t_1Dt_2$  from $w_2$ to $t_1$; now
       $P_3, P_4,P\cup ww_1$, and $P_2'\cup ww_2$ show that $W$ is $(V(H\cap L),S)$-extendable.

Suppose $t\in V(P)$. Choose $t$ so that $t_1Dt$ is maximal.
Then note that $t_1Dt=t_1P_1t$ (by the maximality of $K$) and, for any
vertex $t^*\in V(t_1P_1t)$, $t^*\notin T_1$ to avoid the 3-cut
$\{t^*,t_4,v\}$ in $G$. Thus,  since $T_1\ne \emptyset$, it follows from the maximality of $K'$ that $t$ is cofacial with
some vertex $t'\in V(w_4Fw_1-w_1)$. Choose $t'$ so that $t'Fw_1$ is
maximal. Now $t_4\ne  w_4$; otherwise,  $G$ has a 4-separation
 $(G_1',G_2')$ such that $V(G_1'\cap G_2')=\{t, t',t_4,v\}$,
 $G_1'\subseteq G_1-t_5$,
 $G_2+t_5\subseteq G_2'$, and $|V(G_1')|\ge 6$,
 contradicting the choice of $(G_1,G_2)$.
 Now, since $t, t'$ are cofacial and $t^*\notin T_1$ for any
vertex $t^*\in V(t_1Pt)$, it follows from planarity that   $T_1\subseteq V(w_4Ft'-w_4)$, and  we choose $u_1\in T_1$ with
$w_4Fu_1$  maximal.
 Thus  $G_1$ has a 5-separation $(H',L')$ such that $V(H'\cap L')=\{t',t,v,t_4,u_1\}$ is independent in $H'$, $\{w,w_2,w_3,w_4\}
          \subseteq V(H')\setminus V(L')$, and $L+\{t_1, t_5\}\subseteq
          L'$. Note that $|V(H'\cap L')\cap
        V(G_1\cap G_2)|<|S|$ (since $t\ne t_1$). So by the choice of $(H,L)$, $H'$ contains no $V(H'\cap L')$-good wheel.
        Hence by Lemma~\ref{5cutobs},  $(H',V(H'\cap L'))$ must be the 9-vertex graph in Figure~\ref{obstructions}. However, this
        is impossible as  $w_4w_2\notin E(H')$ but $w_4$ is the unique neighbor of $u_1$ in $H'-L'$.

Thus, $t\in V(w_1Fw_2)\setminus \{w_1,w_2\}$ for all choices of $t$.
Choose $t$ so that $w_1Ft$ is minimal.
Now, $V(P_1\cap t_1Dt)=\{t_1\}$ and, by the maximality of $K$, each vertex of $P_1$ is cofacial with some vertex in $V(w_1Ft-w_1)$.

Suppose there exists $u_1\in V(P_1-w_1)\cap T_1$. Then there exists $u_1'\in
V(w_1Ft-w_1)$ such that $u_1'$ and $u_1$ are cofacial. Choose such
$u_1,u_1'$  that $u_1P_1t_1$ and $u_1'Ft$ are minimal. Suppose there exists $w'\in N_G(w)\cap V(u_1'Ft-\{u_1',t\})$.
Then since $G$ is 4-connected, it follows from the
choice of $u_1,u_1'$ that $H$ has a path $P_1'$ from $w'$ to $t_1$ and internally disjoint from $F\cup P$. Now $P_1'\cup ww',
P_2\cup R_2, P_3\cup R_3,  P\cup ww_1$ show that $W$ is $V(G_1\cap G_2)$-extendable.
So we may assume $N_G(w)\cap V(u_1'Ft-\{u_1',t\})=\emptyset$.
Then  $G_1$ has a 5-separation $(H',L')$ such that $V(H'\cap L')=\{t,v, t_4,u_1,u_1'\}$ is independent in $H'$,
$\{w,w_1,w_2,w_3\}\subseteq V(H')\setminus V(L')$,
and $L+\{t_1,t_5\} \subseteq L'$.  Note that $w_1w_2,w_1w_3\notin E(H')$; so $(H', V(H'\cap L'))$ cannot be any graph in Figure~\ref{obstructions}.
Thus, by  Lemma~\ref{5cutobs}, $H'$ has a
$V(H'\cap L')$-good wheel. Hence,  $(H',L')$  contradicts
the choice of $(H,L)$ as  $|V(H'\cap L')\cap V(G_1\cap G_2)|<|S|$.

Thus, we may assume $V(P_1-w_1)\cap T_1=\emptyset$. So there exists $u_1\in
V(w_4Fw_1-w_4)\cap T_1$. Choose $u_1$ so that $u_1Fw_1$ is minimal.

If $t_4=w_4$ then $G$ has a
4-separation $(G_1',G_2')$ such that $V(G_1'\cap
G_2')=\{t,v,w_4,w\}$, $G_1'\subseteq G_1-t_5$,
$G_2\subseteq G_2'$, and $|V(G_1')|\ge 6$; which
contradicts the choice of $(G_1,G_2)$.
So $t_4\ne w_4$. Then $G_1$ has a 5-separation $(H',L')$ such that $V(H'\cap L')=\{t,v, t_4,u_1,w\}$ is independent in $H'$,
$\{w_2,w_3,w_4\}\subseteq V(H')\setminus V(L')$,
and $L+\{t_1,t_5\} \subseteq L'$. Now $H'$ has no $V(H'\cap
L')$-good wheel; otherwise, $(H',L')$  contradicts
the choice of $(H,L)$ as $|V(H'\cap L')\cap V(G_1\cap G_2)|<|S|$.
Hence, by Lemma~\ref{5cutobs}, $(H',V(H'\cap L')$ is the 8-vertex or 9-vertex graph in Figure~\ref{obstructions}. Note that $w$ is adjacent to all of $w_2, w_3, w_4$.
Thus,  $(H',V(H'\cap L')$ must be the 8-vertex graph in Figure~\ref{obstructions}.
However, this forces $t_2=w_2$ and $t_3=w_3$; so $t_2t_3\in E(W)\subseteq E(H)$, a contradiction as $V(H\cap L)$ is independent in $H$.

\medskip

         {\it Subcase} 1.2 Neither $Q_2,Q_3$ nor  $R_2,R_3$ exist.

           Then, by the choice of $(G_1, G_2)$, we see that $t_1Dt_2$ and $v$ are cofacial, and that $t_3Dt_4$ and $v$
           are cofacial. Moreover, since $G$ is 4-connected, $\{t_2,
           t_3, v\}$ is not a cut in $G$. Hence,
           $V(L)=\{t_1,t_2,t_3,t_4,t_5, v\}$ and,  by the choice of
           $(G_1,G_2)$,  we have $vt_2,vt_3\in E(G)$.

           Suppose there exist $a\in V(t_1Dt_2)\cap V(w_1Fw_2-w_2)$ and
          $b\in V(t_3Dt_4)\cap V(w_3Fw_4-w_3)$. Then  $G$ has a 4-separation $(G_1',G_2')$ such that $V(G_1'\cap G_2')=\{a,b,v,w\}$, $\{w_2,w_3\}\subseteq V(G_1')\setminus V(G_2')$,
          and $G_2+\{t_1,t_4,t_5\}\subseteq G_2'$. Now $(G_1',G_2')$
          contradicts the choice of $(G_1,G_2)$.

         So by symmetry, we may assume that  $t_3Dt_4\cap (w_3Fw_4-w_3)= \emptyset$. Thus, $t_3Dt_4\cup P_3$ contains a path $P$ from $w_3$ to $t_4$
        and internally disjoint from $F$. If $H$ has a path $Q$ from $w_4$ to $t_5$ and internally disjoint from $P\cup w_3Fw_1\cup P_1$ then
           $P_1,P_2, P\cup ww_3, Q\cup ww_4\}$
         show that $W$ is  $(V(H\cap L),S)$-extendable. So assume that $Q$ does not exist. Then there exist $x\in V(P)\cup V(w_3Fw_4-w_3)$ and
         $y\in V(P_1)\cup V(w_4Fw_1-w_4)$  such that $x$ and $y$ are cofacial.  By the choice of $P$ and planarity of $H$, $x\in V(t_3Dt_4)\cap V(P_4-w_4)$.
        Choose $x$ to minimize $xDt_4$.

        First, suppose $t_1Dt_2\cap (w_1Fw_2-w_2)\ne \emptyset$ and $y\in
        V(w_4Fw_1-w_4)$ for all choices of $y$.  Choose $a\in V(t_1Dt_2)\cap V(w_1Fw_2-w_2)$ such that $w_1Fa$ is minimal,
       and  choose $x,y$ so that $yFw_1$ is minimal. Then $G_1$ has a 5-separation $(H',L')$ such that $V(H'\cap L')=\{a,v,x,y,w\}$ is independent in $H'$,
       $\{w_2,w_3,w_4\}\subseteq V(H')\setminus V(L')$, and
         $L+\{t_1,t_4,t_5\}\subseteq L'$. Since $|V(H'\cap L')\cap V(G_1\cap G_2)|<|S|$, we see that $H'$ has no $V(H'\cap L')$-good wheel. Hence, by Lemma~\ref{5cutobs},
         $(H',V(H'\cap L'))$ is the 8-vertex or 9-vertex graph in Figure~\ref{obstructions}. Since $w$ is adjacent to all of $w_2,w_3,w_4$, we see that
         $|V(H')|=8$ which forces $t_2=w_2$ and $t_3=w_3$. Hence,
         $t_2t_3\in E(W)\subseteq E(H)$, a contradiction as $V(H\cap   L)$ is independent in $H$.

      Now suppose $t_1Dt_2\cap (w_1Fw_2-w_2)\ne \emptyset$ and
         $y\in V(P_1-w_1)$ for some choice of $y$.  Choose such $y$ so that $yP_1t_1$ is minimal; so $K$ has  a path $P_5$ from $y$ to $t_5$ and internally
         disjoint from $P_1\cup P_4$.   Let $a\in V(t_1Dt_2)\cap  V(w_1Fw_2-w_2)$ such that $w_1Fa$ is minimal.
          Note that $y\notin V(t_1Dt_2)$ (to avoid the 3-cut $\{v,x,y\}$ in $G$).   So $y$ is not cofacial with $P_2-w_2$ and,
        by the maximality of $K$, there exists $y'\in V(w_1Fa-w_1)$ such that $y$ and $y'$ are cofacial.
         We choose $y'$ so that $y'Fa$ is minimal.
         If there exists $w'\in N_G(w)\cap V(y'Fa-\{y',a\})$ then
        by the minimality of $yP_1t_1$ and $y'Fa$ and by the 4-connectedness of $G$,
         $H$ has a path $S$ from $w'$ to $t_1$ and
        internally disjoint from $F\cup P_5$; now $S\cup ww', P_2, P\cup ww_3, wP_1y\cup P_5$ show that $W$ is $(V(H\cap L),S)$-extendable.
         Hence, we may assume $N_G(w)\cap V(y'Fa)=\emptyset$.
         So $G_1$ has a 5-separation $(H',L')$ such that $V(H'\cap L')=\{a,v,x,y,y'\}$ is independent in $H'$,
         $\{w,w_1,w_2,w_3,w_4\}\subseteq V(H')\setminus V(L')$,
          and $L+\{t_1,t_4,t_5\}\subseteq L'$.   By Lemma~\ref{5cutobs}, $H'$ contains a $V(H'\cap L')$-good wheel. Now $(H',L')$ contradicts the choice of $(H,L)$, as
         $|V(H'\cap L')\cap V(G_1\cap G_2)|<|S|$.

          Hence, we may assume that  $t_1Dt_2\cap (w_1Fw_2-w_2)=
          \emptyset$. Then $t_1Dt_2\cup P_2$ has a path $Q$ from $w_2$
          to $t_1$ and internally disjoint from $F$. Similar to the
          argument for  showing the existence of $x$ and $y$ above, we
          may assume that there exist $p\in V(t_1Dt_2)\cap V(P_1-w_1)$
    and $q\in V(P_4)\cup V(w_4Fw_1-w_1)$ such that $p$ and $q$ are cofacial.
            Then $w_4,y,q,w_1$ occur on $F$ in clockwise order; for, otherwise, by planarity of $G_1$, $G$ would have a 3-cut consisting of $v$, one of $\{x,q\}$,
          and one of $\{p,y\}$.

So $G_1$ has a 5-separation $(H',L')$ such that $V(H'\cap L')=\{p,v,x,y,q\}$ is independent in $H'$,
           $\{w,w_1,w_2,w_3,w_4\}\subseteq V(H')\setminus V(L')$, and $L+\{t_1,t_4,t_5\}\subseteq L'$. By Lemma~\ref{5cutobs}, $H'$ has a $V(H'\cap L')$-good wheel. If $\{x,p\}\ne
           \{t_1,t_4\}$ then
          $|V(H'\cap L')\cap V(G_1\cap G_2)|<|S|$; and hence
          $(H',L')$ contradicts the choice of $(H,L)$. So $x=t_4$ and
           $p=t_1$; hence  $(iii)$ holds.

  \medskip

  {\it Case} 2.  Case 1 does not occur.

We choose $P_1,P_4$, subject to the minimality of $w_4Fw_1$, to maximize
the subgraph $K$ of $H$ contained in the closed region bounded by $(P_1-w)\cup (P_4-w)\cup t_4Dt_1\cup w_4Fw_1$.

Without loss of generality, we may assume that
$G_1$ has no path from $t_5$ to $P_4$ and internally disjoint from $P_4\cup P_1\cup w_4Fw_1$, but
 $G_1$ has a path $P_5'$ from $t_5$ to $P_1$ and internally disjoint from $P_4\cup P_1\cup w_4Fw_1$.
Then $t_4Dt_5\cap ((w_4Fw_1-w_4)\cup P_1)\ne \emptyset$.
Moreover, we may assume $t_1\in S$; as otherwise we could reroute $P_1$ to end at $t_5$ which,
 along with $P_2,P_3,P_4$, shows that $W$ is $(V(H\cap L),S)$-extendable.

\medskip

{\it Subcase} 2.1.  $t_4Dt_5\cap (w_4Fw_1-w_4)=\emptyset$ and  $t_1Dt_2\cap (w_1Fw_2-w_2)= \emptyset$.

Then there exists $a\in V(t_4Dt_5)\cap V(P_1-w_1)$,
and we choose such $a$ with $t_1P_1a$ minimal. Note $t_1\ne a$ by the existence of $R$.  Let $b\in V(t_1Dt_2)\cap V(P_1-w_1)$ with $t_1P_1b$ maximal.
By the maximality of $K$, $t_1Db=t_1P_1b$.
If $a\in V(t_1P_1b)$ then $G$ has a 4-separation $(G_1',G_2')$ such that $V(G_1'\cap G_2')=\{a,t_2,t_3,t_4\}$, $\{w,w_1\}\subseteq V(G_1')\setminus V(G_2')$, and
$G_2+\{t_1,t_5\}\subseteq G_2'$, which  contradicts the choice of $(G_1,G_2)$.

Hence, $a\in V(bP_1w_1)\setminus \{b,w_1\}$. Let $P_5:=wP_1a\cup aDt_5$. We consider the paths $P_2,P_3,P_4,P_5$. By the minimality of $t_1P_1a$, we see that $t_1P_1a$ is a path
from $t_1$ to $P_5$ and internally disjoint
from $P_5\cup P_2\cup F$. Since  $t_1Dt_2\cap (w_1Fw_2-w_2)= \emptyset$, $t_1Dt_2$ contains a path from $t_1$ to $P_2$ and internally disjoint from $P_5\cup P_2\cup F$. Hence,
we are back to Case 1 (with $t_5,t_1,t_2,t_3,t_4$ as $t_4,t_5,t_1,t_2,t_3$, respectively).

\medskip

{\it Subcase} 2.2. Either $t_4Dt_5\cap (w_4Fw_1-w_4)\ne \emptyset$ or  $t_1Dt_2\cap (w_1Fw_2-w_2)\ne \emptyset$.

First, we may assume that $t_4Dt_5\cap (w_4Fw_1-w_4)\ne \emptyset$. For, if not, then
there exists $a\in V(P_1-w_1)\cap V(t_4Dt_5)$ and choose $a$ so that $t_1P_1a$ is minimal. Moreover,
$t_1Dt_2\cap (w_1Fw_2-w_2)\ne \emptyset$; so $H$ has no path from $t_1$ to $P_2$ and internally disjoint from $F$.
Note that $t_1\ne a$ by the path $R$.
Let $P_5:=wP_1a\cup aDt_5$.
Now consider the paths $P_2,P_3,P_4,P_5$.  We see that $t_1P_1a$ is a path
from $t_1$ to $P_5$ and internally disjoint
from $P_5\cup P_2\cup F$.
Hence, since $t_1Dt_2\cap (w_1Fw_2-w_2)\ne \emptyset$, we could take the mirror image of $G_1$ and view  $t_2,t_1,t_5, t_4,t_3$ as
$t_4,t_5,t_1,t_2,t_3$, respectively;  and, thus, may assume $t_4Dt_5\cap (w_4Fw_1-w_4)\ne  \emptyset$.

Then  $t_1Dt_2\cap (w_1Fw_2-w_2)= \emptyset$, and we let $a\in  V(t_4Dt_5)\cap V(w_4Fw_1-w_4)$ with $w_4Fa$ minimal. Let $t\in V(t_1Dt_2\cap P_1)$ with $t_1Dt$ maximal.
Then $t\ne w_1$ and,  by the maximality of $K$, $t_1Dt=t_1P_1t$.

Note that $t_4Dt_5\cap t_1Dt_2=\emptyset$.  For,  otherwise, let $p\in V(t_4Dt_5)\cap V(t_1Dt_2)$. Then $G$ has a 4-separation
$(G_1',G_2')$ with $V(G_1'\cap G_2')=\{p,t_2,t_3,t_4\}$, $w,w_1\in V(G_1'-G_2')$, and $G_2+\{t_1,t_5\}\subseteq G_2'$.
Clearly, $(G_1',G_2')$ contradicts the choice of $(G_1,G_2)$.

If  there exist $c\in V(t_1Dt)$ and $b\in V(aFw_1-w_1)$
such that $b$ and $c$ are cofacial, then $(iv)$ holds. So assume such $b,c$ do not exist. Then
$K$ contains a path $P$ from $w_1$ to $t_5$ and internally disjoint from $F\cup  t_1Dt_2$.
By the existence of a path in $P_1$ from $t_1$ to $P$ and the
path $t_1Dt_2$, we are back to Case 1 (with $t_5,t_1,t_2, t_3, t_4$ playing the roles of $t_4,t_5,t_1, t_2, t_3$, respectively). $\Box$

\medskip

{\it Subcase} 2.3. $t_4Dt_5\cap (w_4Fw_1-w_4)\ne \emptyset$ and  $t_1Dt_2\cap (w_1Fw_2-w_2)\ne \emptyset$.

Let $a\in V(t_4Dt_5)\cap V(w_4Fw_1-w_4)$ and $b\in V(t_1Dt_2)\cap V(w_1Fw_2-w_2)$, and we choose $a,b$ to minimize $aFb$.
  Consider the separation $(H_1,H_2)$ in $G_1$ such that  $V(H_1\cap
  H_2)=\{a,b,w\}$, $V(H_1)\cap \{t_i:i\in [5]\}= \{t_1,t_5\}\subseteq
  S$, and $bFa+\{t_2,t_3,t_4\}\subseteq H_2$.

\begin{itemize}
  \item [(1)]  $|N_G(w)\cap V(aFb)|\ge 2$.
\end{itemize}
For, $|N_G(w)\cap V(aFb)|=1$.
If $w_1\notin \{a,b\}$  then we have $(ii)$.
 So by symmetry, assume $w_1=b$. Consider the 5-separation $(H',L')$ in $G_1$ such that $V(H'\cap L')=\{a,b, t_2,t_3,t_4\}$ is independent in $H'$,
 $bFa+w\subseteq H'$, and $L\cup H_1\subseteq L'$. By the choice of $(H,L)$, $H'$ has no $V(H'\cap L')$-good wheel. So by
Lemma~\ref{5cutobs}, $(H',V(H'\cap L'))$  is one of the graphs in Figure~\ref{obstructions},

  Suppose $w_3\ne t_3$. Then $w_4\ne t_4$ to avoid the 4-separation $(G_1',G_2')$ with
 $V(G_1'\cap G_2')=\{b,t_2,t_3,w_4\}$, $\{w,w_3\}\subseteq V(G_1'-G_2')$, and $G_2+\{t_1,t_5\}\subseteq G_2'$. So  $ww_3w_4w\subseteq H'-L'$, and
$(H',V(H'\cap L'))$ must be the  9-vertex graph in Figure~\ref{obstructions}. However,  this is impossible, as $w_4b\notin E(H')$ and one of the following holds:
 $w_4$ is the  unique neighbor of $a$ in $H'-L'$,  or $w$ is the unique neighbor of $b$ in $H'-L'$.

 Therefore, $w_3=t_3$. Then $t_2\ne w_2$; for, otherwise,  $w_2Fw_3=t_2t_3\in E(H)$ as $G$ is 4-connected, a
 contradiction. Similarly, $t_4\ne w_4$.  Thus, $w_2ww_4\subseteq H'-L'$. So by Lemma~\ref{5cutobs}, $(H',V(H'\cap L'))$ must be the 8-vertex or
 9-vertex graph in Figure~\ref{obstructions}. But this is not possible, as $w_4b\notin E(H')$ and one of the following holds:
$w_4$ is the unique neighbor of $a$ in $H'-L'$, or $w$ is the unique neighbor of $b$ in $H'-L'$. $\Box$

\medskip

We may assume $t_1\ne w_1$; since otherwise $b=t_1$ by the minimality of $aFb$, and we would have $|N_G(w)\cap V(aFb)|=1$, contradicting (1).
We may also assume
\begin{itemize}
\item [(2)]  $N_G(w)\cap V(aFb)\ne \{a,b\}$.
\end{itemize}
For, otherwise, $a,b\in N_G(w)$ (so $w_1=a$) and $G$ has a 4-separation $(G_1',G_2')$ such that $V(G_1'\cap G_2')=\{a,b,t_1,t_5\}$,
$aFb\subseteq G_1'$, and $G_2\cup bFa+w\subseteq G_2'$. Hence, by the choice of $(G_1,G_2)$, $|V(G_1')|\le 5$.

If $|V(G_1')|=4$ then $P_1=wat_1$; so
$bt_1\in E(G)$ (by the minimality of $aFb$) and $at_5\in E(G)$ (by the path $R$),  and, hence, $wat_5,wbt_1$ and two of $P_2,P_3,P_4$ show that $W$ is $(V(H\cap L),S)$-extendable.

Hence, we may assume $|V(G_1')|=5$ and let $u\in V(G_1')\setminus V(G_2')$. Then $N_G(u)=\{a, b, t_1,t_5\}$. Since $a=w_1$, $u\notin V(W)$ and $P_1=waut_1$.
If $t_5a\in E(G)$ then $wat_5, wbut_1$, and two of $P_2,P_3,P_4$
show that  $W$ is $(V(H\cap L),S)$-extendable. If $t_1b\in E(G)$ then $waut_5, wbt_1$, and two of $P_2,P_3,P_4$ show that $W$ is $(V(H\cap L),S)$-extendable. So assume $t_5a,t_1b\notin E(G)$. Then
$G$ has a 4-separation $(G_1'',G_2'')$ such that $V(G_1''\cap G_2'')=\{t_2,t_3,t_4,u\}$,
$\{a,b,w\}\subseteq V(G_1'')\setminus V(G_2'')$, and $G_2+\{t_1,t_5\}\subseteq G_2'$. Hence, $(G_1'',G_2'')$  contradicts the choice of $(G_1,G_2)$.  $\Box$

\medskip

Now consider the 5-separation $(H',L')$ in $G_1$ with $V(H'\cap L')=\{a,b,w,t_1,t_5\}$, $bFa\subseteq L'$, and $aFb+\{t_1,t_5\}\subseteq H'$. Note that $|V(H'\cap L')\cap V(G_1\cap G_2)|\le |S|$ and
$H'\subseteq H$ but $H'\ne H$; so by the choice of $(H,L)$, $H'$ has no $V(H'\cap L')$-good wheel. Thus, since $N_G(w)\cap V(aFb)\ne \{a,b\}$, $(H', V(H'\cap L'))$ is one of the graphs in
Figure~\ref{obstructions}. Recall that $t_5t_1\notin E(H)$ as $V(H\cap L)$ is independent in $H$.

First, suppose $|V(H')|=6$ and let $u\in V(H')\setminus V(L')$. Then by (1) and (2), $aFb=aub$ and $u\in N_G(w)$.  By the minimality of $aFb$, $t_1b\in E(G)$. If $u=w_1$ then $t_5u\in E(G)$ (because of $R$) and $wb\in E(G)$; so
$wut_5,wbt_1$ and two of $P_2,P_3,P_4$ show that $W$ is $(V(H\cap L),S)$-extendable. Hence, we may assume $a=w_1$. Then $at_5,at_1\in E(G)$ (because of $P_1$ and $R$) and, hence, $ut_1\in E(G)$;
so $wat_5,wut_1$ and two of $P_2,P_3,P_4$ show that $W$ is $(V(H\cap L),S)$-extendable.

Now assume $|V(H')|=7$.
First, suppose $|V(aFb)|\ge 4$ and let $aFb=auvb$.   If $P_1=wat_1$ then $G$ has a separation $(G_1',G_2')$ such that $V(G_1'\cap G_2')=\{a,t_1,b,w\}$, $\{u,v\}\subseteq
V(G_1'-G_2')$, and $G_2+t_5\subseteq G_1$; and  $(G_1',G_2')$ contradicts the choice of $(G_1,G_2)$.
If $P_1=wvt_1$ then $wa,wu\notin E(H)$; so $ut_1,ut_5\in E(H)$,
contradicting the existence of the path $P_5'$.  So $P_1=wut_1$ then
$t_5u\in E(H)$ (by $P_5'$) and $vw,vt_1\in E(H)$ (by 4-connectedness of $G$); so $wut_5, wvt_1$ and two of $P_2,P_3,P_4$ show that $W$ is $(V(H\cap L),S)$-extendable.
So we may assume $|V(aFb)|=3$ and let $aFb=aub$ and $v\in V(H')\setminus (V(L')\cup \{u\})$.  Then $wu\in E(G)$ by (1) and (2).
If $t_5u\in E(G)$ then $N_G(v)=\{b,t_1,t_5,u\}$ and $t_5a\in E(G)$ (by the minimality of $aFb$); now
$wat_5,wuvt_1$ (when $wa\in E(G)$) or $wut_5, wbvt_1$ (when $wb\in E(G)$), and two of $P_2,P_3,P_4$ show that $W$ is $(V(H\cap L),S)$-extendable. So assume $t_5u\notin E(G)$. By the same argument, we may assume $t_1u\notin E(G)$.
 Then $t_1v,t_5v\in E(G)$. Note that $t_1b\in E(G)$ or $t_5a\in E(G)$; otherwise, $(H-\{t_1,t_5\}, G_2\cup L\cup t_1vt_5)$ is a 4-separation in $G$ contradicting the choice of $(G_1,G_2)$. So by symmetry, we may assume
$t_5a\in E(G)$. If $wa\in E(G)$ then $wat_5, wuvt_1$, and  two of $P_2,P_3,P_4$ show that $W$ is $(V(H\cap L),S)$-extendable. So assume $wa\notin W(G)$; hence,
$wb\in E(G)$ by (1). If  $t_1b\in E(G)$ then
$wbt_1, wuvt_5$, and two of $P_2,P_3,P_4$ show that $W$ is $(V(H\cap L),S)$-extendable. So assume $t_1b\notin E(G)$. Now $G_1$ has a 5-separation $(H^*,L^*)$
such that $H^*=(H-t_1)-t_5v$ and $L^*=L\cup t_1vt_5$. Note that $V(H^*\cap L^*)$ is independent in $H^*$ and $W$ is $V(H^*\cap L^*)$-good. So $(H^*,L^*)$ contradicts the choice of $(H,L)$ as
$|V(H^*\cap L^*)\cap V(G_1\cap G_2)|<|S|$.

Suppose $|V(H')|=8$ and let $H'-L'=xyz$. Note that exactly one vertex  in  $V(H'\cap L')$ is adjacent to all of $\{x,y,z\}$, and call that vertex $t$. If $t=w$ then we may let $aFb=axyzb$; we see that $wxt_5, wzt_1$ and
 two of $P_2,P_3,P_4$ show that $W$ is $(V(H\cap L),S)$-extendable. If $t=t_5$ then we may let $aFb=axyb$; we see that $wxt_5, wyzt_1$,  and two of $P_2,P_3,P_4$ show that $W$ is $(V(H\cap L),S)$-extendable. Similarly, if $t=t_1$
then $W$ is $(V(H\cap L),S)$-extendable. Now assume $t=a$; the argument for $t=b$ is symmetric. Then we may let $aFb=axb$.
   If $wa\in E(H)$ then $wazt_5, wxyt_1$, and
 two of $P_2,P_3,P_4$ show that $W$ is $(V(H\cap L),S)$-extendable. So assume $wa\notin E(H)$. Then $wb\in E(H)$ by (1).
If $t_1b\in E(H)$ then $wxyt_5, wbt_1$, and two of $P_2,P_3,P_4$ show that $W$ is $(V(H\cap L),S)$-extendable. So $t_1b\notin E(H)$.
Let $H^*=(H-t_5)-t_1z$ and $L^*=L\cup t_1zt_5$. Note that $V(H^*\cap L^*)$ is independent in $H^*$ and
$W$ is $V(H^*\cap L^*)$-good. So $(H^*,L^*)$ contradicts the choice of $(H,L)$ as
$|V(H^*\cap L^*)\cap V(G_1\cap G_2)|<|S|$.


Finally, assume $|V(H')|=9$. Let $V(H'-L')=\{u,x,y,z\}$ such that $xz\notin E(H)$, and $u$ is the unique neighbor of some vertex $t\in V(H'\cap L')$.
If $t=w$ then we see that $aFb=aub$ and let $ax,zb\in E(H)$; now $waxt_5, wuyt_1$ (when $wa\in E(H)$) or  $wuxt_5,wbzt_1$ (when $wb\in E(H)$),
and  two of $P_2,P_3,P_4$ show that $W$ is $(V(H\cap L),S)$-extendable.
If $t=a$ then we may let $aFb=auxb$; then $wut_5,wxyzt_1$,  and two of $P_2,P_3,P_4$ show that $W$ is $(V(H\cap L),S)$-extendable. If  $t=b$ then may let $aFb=axub$; then
$wxyt_5,wut_1$, and two of $P_2,P_3,P_4$ show that $W$ is $(V(H\cap L),S)$-extendable. If $t=t_5$ then we may let $aFb=axyb$; now $wxut_5,wyzt_1$ and two of $P_2,P_3,P_4$ show that $W$ is $(V(H\cap L),S)$-extendable.
If $t=t_1$ then we may let $aFb=ayxb$; now $wyzt_5, wxut_1$, and two of $P_2,P_3,P_4$ show that $W$ is $(V(H\cap L),S)$-extendable.
\end{proof}

Next, we eliminate the possibility $(iv)$ of Lemma~\ref{extension-4cut1} by
working with more than one wheels.

\begin{lem}\label{extension-4cut1}
With the same assumptions of Lemma~\ref{extension-4cut}, $H$ has a $(V(H\cap L),S)$-extendable wheel, or
$G_1$ has a $V(G_1\cap G_2)$-extendable wheel, or $(i)$ or $(ii)$ or $(iii)$ of Lemma~\ref{extension-4cut} holds for any $w\in V(H-L)$ and
for any $V(H\cap L)$-good wheel $W$ in $H$ with center $w$.
\end{lem}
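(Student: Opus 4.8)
The plan is to argue by contradiction, exploiting the freedom to introduce a \emph{second} wheel inside the piece produced by alternative $(iv)$. Assume that $H$ has no $(V(H\cap L),S)$-extendable wheel, that $G_1$ has no $V(G_1\cap G_2)$-extendable wheel, and that some $V(H\cap L)$-good wheel $W$ with center $w\in V(H-L)$ fails all of $(i),(ii),(iii)$ of Lemma~\ref{extension-4cut}. Then Lemma~\ref{extension-4cut} forces $(iv)$: there are $a,b\in V(W-w)\setminus N_G(w)$, a vertex $c\in V(H)\setminus V(W)$, and a separation $(H_1,H_2)$ of $H$ with $V(H_1\cap H_2)=\{a,b,c\}$, $N_G(w)\cup\{w\}\subseteq V(H_2)\setminus V(H_1)$, and $\{s_1,s_2\}:=V(H_1)\cap V(H\cap L)\subseteq S$ with $s_1\neq s_2$. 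Since $H_1\subseteq H$ and $V(G_1\cap G_2)\subseteq V(L)$, one checks $V(H_1)\cap V(G_1\cap G_2)=\{s_1,s_2\}$, so I would detach $H_1$ along $\{a,b,c,s_1,s_2\}$ by setting $\hat H:=H_1$ and $\hat L:=H_2\cup L$, obtaining a separation of $G_1$ with $V(\hat H\cap\hat L)=\{a,b,c,s_1,s_2\}$ and $V(G_1\cap G_2)\subseteq V(\hat L)$. We may assume these five vertices are distinct (the coincidence $c\in\{s_1,s_2\}$ collapses the cut to size $4$ and is folded into the finite analysis below). After moving any edge of $H_1$ joining two cut vertices into $\hat L$, the cut $V(\hat H\cap\hat L)$ is independent in $\hat H$, and $(\hat H,V(\hat H\cap\hat L))$ is planar because $H_1$ occupies a sub-disc of the drawing of $G_1$ bounded by the curve through $a,b,c$ and the arc of the outer boundary carrying $s_1,s_2$. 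This is exactly the setting of Lemma~\ref{5cutobs}, which produces the promised second wheel.

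Applying Lemma~\ref{5cutobs} to $(\hat H,\hat L)$, first suppose it returns a $V(\hat H\cap\hat L)$-good wheel $W'$ in $H_1$. Put $S':=V(\hat H\cap\hat L)\cap V(G_1\cap G_2)$. Because $W$ is $V(H\cap L)$-good we have $V(W)\cap V(H\cap L)\subseteq N_G(w)$, so $a,b\notin V(H\cap L)$, and since $V(G_1\cap G_2)\cap V(H)\subseteq V(H\cap L)$ this gives $a,b\notin V(G_1\cap G_2)$; moreover $c\notin V(G_1\cap G_2)$, as otherwise $c\in V(H_1)\cap V(G_1\cap G_2)=\{s_1,s_2\}$. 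Hence $S'=\{s_1,s_2\}$ and $|S'|=2\le|S|$. Now $\hat H=H_1\subsetneq H$ (because $w\notin V(H_1)$), and $(\hat H,\hat L)$ satisfies $(a)$: it carries the good wheel $W'$, and $V(G_1\cap G_2)\not\subseteq V(\hat H\cap\hat L)$ since $|V(G_1\cap G_2)|=4$ while $|S|\le 3$ leaves a vertex of $V(G_1\cap G_2)$ outside $V(H)\supseteq V(H_1)$. If $|S|=3$ then $|S'|=2<|S|$ contradicts the minimality clause $(b)$; if $|S|=2$ then $S'=S$ and $\hat H\subsetneq H$ contradicts the minimality clause $(c)$. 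Either way this branch is impossible.

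It remains to treat the branch where Lemma~\ref{5cutobs} reports that $H_1$ has \emph{no} good wheel, so $(\hat H,V(\hat H\cap\hat L))$ is one of the obstructions of Figure~\ref{obstructions} (together with the degenerate $4$-cut case $c\in\{s_1,s_2\}$, where $|V(H_1)|=5$). Here $H_1$ has at most four interior vertices and a completely explicit structure, and \emph{every} interior vertex of $H_1$ retains its $G$-degree, since $\{a,b,c,s_1,s_2\}$ is a cut of $G$ isolating $V(H_1)\setminus\{a,b,c,s_1,s_2\}$. I would run through the listed graphs: the $8$-vertex obstruction is excluded at once, because its degree-$3$ vertex, being interior to $H_1$, would have $G$-degree $3<5$, contradicting the final clause of Lemma~\ref{5cutobs}$(iii)$; for each remaining graph the prescribed adjacencies either supply two disjoint paths inside $H_1$ from $\{a,b\}$ to $\{s_1,s_2\}$, which combined with segments of $P_1,\dots,P_4$ through $w$ exhibit $W$ as $(V(H\cap L),S)$-extendable, or else force a $\le 3$-cut of $G$ or an edge inside $V(H\cap L)$, contradicting Lemma~\ref{4conn} or the independence of $V(H\cap L)$ in $H$.

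I expect this last, finite-obstruction branch to be the main obstacle. The good-wheel branch is a clean minimality reduction, but the obstruction branch requires chasing paths through each explicit graph of Figure~\ref{obstructions} and reconciling the degenerate coincidences (when $c$ equals $s_1$ or $s_2$, when two of $a,b,c,s_1,s_2$ are adjacent, or when an interior vertex of $H_1$ coincides with a prescribed neighbor) with planarity, $4$-connectivity, and the independence of $V(H\cap L)$ before the extension can be read off. The secondary delicate point is the bookkeeping needed to certify that the auxiliary separation $(\hat H,\hat L)$ genuinely satisfies hypotheses $(a)$--$(c)$ of Lemma~\ref{extension-4cut}, so that the minimality contradiction in the good-wheel branch is legitimate.
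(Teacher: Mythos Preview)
Your good-wheel branch is sound and is in fact cleaner than what the paper does: detaching $H_1$ along $\{a,b,c,s_1,s_2\}$ and invoking the minimality of $(H,L)$ under $(b)$ and $(c)$ is exactly the right move, and your bookkeeping for $S'=\{s_1,s_2\}$ and for $V(G_1\cap G_2)\not\subseteq V(\hat H\cap\hat L)$ is correct.

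The gap is in the obstruction branch. Your claimed mechanism --- disjoint paths in $H_1$ from $\{a,b\}$ to $\{s_1,s_2\}$, glued to segments of $P_1,\dots,P_4$, showing $W$ is $(V(H\cap L),S)$-extendable --- cannot work as stated. The definition of extendability requires each of the four paths from $w$ to meet $F=W-w$ in exactly one vertex, necessarily a neighbour of $w$. But $a,b\in V(F)\setminus N_G(w)$, so any path from $w$ that passes through $a$ or $b$ already uses at least two vertices of $F$. The only attachment of $H_1$ to $H_2$ that lies off $F$ is $c$; hence at most \emph{one} of the four required paths can enter $H_1$ (via $w_1$ and then $c$), and therefore at most one of $s_1=t_1$, $s_2=t_5$ can be hit. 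Since both $t_1$ and $t_5$ lie in $S$, no choice of paths can certify $W$ itself as $(V(H\cap L),S)$-extendable, regardless of which obstruction graph $H_1$ turns out to be. Your fallback alternatives (a $3$-cut, or an edge inside $V(H\cap L)$) do not materialise either: the cut $\{a,b,c\}$ is not a cut of $G$ because $s_1,s_2\in V(G_1\cap G_2)$ have neighbours in $G_2$, and independence of $V(H\cap L)$ only forbids $s_1s_2$, which the obstructions do not force.

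What the paper does instead is abandon $W$ in this branch and look for a \emph{different} extendable wheel. It first pins down enough structure (e.g.\ $t_3\notin S$, $w_i\neq t_i$, various non-cofacialities) and then passes to the wheel $W_2$ centred at $w_2$; applying Lemma~\ref{extension-4cut} to $W_2$ isolates a short arc near $t_2$, which in turn cuts out a small $5$-separation $(H',L')$ with $V(H'\cap L')=\{b,c,t_2',w_2,w\}$. A Figure~\ref{obstructions} analysis of $H'$ then shows that the wheel centred at $w_1$ (or at an auxiliary vertex $v$ on $P_1$) is $(V(H\cap L),S)$-extendable, using an explicit path $R$ from $b$ to $t_5$ inside $H_1$ together with paths through $w$ to $t_3,t_4$; one residual configuration is dispatched by a direct $4$-colouring extension. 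The point is that for these alternate wheels the vertex $b$ \emph{is} a neighbour of the new centre, so the obstruction you hit with $W$ disappears. Your reduction on $H_1$ could perhaps be combined with this change-of-wheel idea, but as written the obstruction branch needs to be redone along these lines.
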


\begin{proof}
Suppose $(iv)$ of Lemma~\ref{extension-4cut} holds for some $V(H\cap L)$-good wheel $W$ with center $w$. Then
there exist $a,b\in V(W-w)\setminus N_G(w)$, $c\in V(H)\setminus V(W)$,
                      and  separation $(H_1,H_2)$ in $H$ such that
                      $V(H_1\cap H_2)=\{a,b,c\}$,  $|V(H_1)\cap
                      V(H\cap L)|=2$, $V(H_1)\cap V(H\cap L)\subseteq
                      S$, and $(N_G(w)\cup \{w\})\cap V(H_1)=\emptyset$.
       Let $G_1$ be drawn in a closed disc in the plane with no edge crossings such that $V(G_1\cap G_2)$ is contained in the boundary of that disc.
Let $V(H\cap L)=\{t_i: i\in [5]\}$ and we may assume that $(H,t_1,t_2,t_3,t_4,t_5)$ is planar.
  Recall from the assumptions in Lemma~\ref{extension-4cut} that $(H,L)$ is chosen to minimize $|S|$, where $S:=V(G_1\cap G_2)\cap V(H\cap L)$.
Without loss of generality, we may assume that $V(H_1)\cap V(H\cap L)=\{t_1,t_5\}$. So $t_1,t_5\in S.$

By Lemma~\ref{extension-5cut}, $W$ is $V(H\cap L)$-extendable in $H$.
So there are four paths $P_i, i\in [4]$, in $H$ from $w$ to $\{t_i: i\in [5]\}$, such that
$V(P_i\cap P_j)=\{w\}$ for $i\ne j$, $|V(P_i)\cap W|=2$ for $i\in [4]$,
$|V(P_i)\cap \{t_j: j\in [5]\}|=1$ for $i\in [4]$. Without loss of generality, we may assume that $t_i\in V(P_i)$ for $i\in [4]$.
Note that $P_2,P_3,P_4$ are disjoint from $H_1$, and $P_1\cap H_1=cP_1t_1$. We further choose $a, b,c$ so that
$aFb$ and $cP_1t_1$ are minimal.

Now $t_3\notin S$. For, suppose $t_3\in S$. Then, $t_3$ is cofacial with $t_1$ or $t_5$.  If $t_3$ is cofacial with $t_5$ then
$G$ has a 4-separation $(G_1',G_2')$ such that $V(G_1'\cap G_2')=\{t_1, t_2, t_3,t_5\}$, $W\subseteq G_1'$, and $G_2+t_2\subseteq G_2'$;
 which contradicts the choice of $(G_1,G_2)$.  We derive a similar
 contradiction if $t_3$ is cofacial with $t_1$, using the cut $\{t_1,t_3,t_4,t_5\}$.

Let $F=W-w$ and let $D$ denote the outer walk of $H$.  We choose
$P_1,P_2,P_3,P_4$ so that the following are satisfied in the order listed: $w_4Fw_1$ is minimal,   $w_3Fw_2$ is
minimal, and the subgraph $K$ of $H$ contained inside the region
bounded by $P_4\cup t_4Dt_1\cup P_1$ is minimal.
Then every vertex of $P_4$ is cofacial with a vertex in $w_4Fa-w_4$,
every vertex of $P_1$ is cofacial with a vertex in $bFw_1-w_1$, and

\begin{itemize}
\item [(1)]  $N_G(w)\cap V(w_3Fw_2)=\{w_1,w_2,w_3,w_4\}$.
\end{itemize}
For, suppose (1) fails and let $w'\in N_G(w)\cap V(w_3Fw_2)\setminus
\{w_1,w_2,w_3,w_4\}$.
First, assume $w'\in V(w_4Fw_1)\setminus \{w_1,w_4\}$. If $w'\in
V(w_4Fa-w_4)$ then since $G$ is 4-connected, $K$ has a path $P$ from $w'$
to $P_4$ and internally disjoint from $P_4\cup F$. Hence, we can
replace $P_4$ by a path in $P\cup (P_4-\{w,w_4\})$ from $w$ to $t_4$,
contradicting the minimality of $K$. We get the same contradiction if
$w'\in V(bFw_1-w_1)$.

Now assume  $w'\in V(w_3Fw_4)\setminus
\{w_3,w_4\}$. Consider the subgraph $J$ of $H$ contained in the closed
region bounded by $P_3\cup t_3Dt_4\cup P_4$. By the minimality of
$w_3Fw_2$, $J$ has no path from $w'$ to $t_3$ and internally disjoint
from $F\cup P_4$. Thus, there exist $x\in V(w_3Fw'-w')$ and $y\in
V(w'Fw_4-w')\cup V(P_4)$ such that $x,y$ are cofacial. Since $G$ is
4-connected, $y\in V(P_4-w_4)$. Note that $y$ is cofacial with some
vertex on $w_4Fa-w_4$, say $z$. Then $G$ has a 4-separation
$(G_1',G_2')$ such that $V(G_1'\cap G_2')=\{w,x,y,z\}$, $w'Fw_4\subseteq G_1'-G_2'$,
and $G_2+\{t_1,t_2,t_3,t_5\}\subseteq G_2'$. However, $(G_1',G_2')$
contradicts the choice of $(G_1,G_2)$.

Similarly, if  $w'\in V(w_1Fw_2)\setminus \{w_1,w_2\}$ then we derive a contradiction. $\Box$

\begin{itemize}
  \item [(2)] $w_i\ne t_i$ for $i\in \{2,3,4\}$.
\end{itemize}
First, $w_3\ne t_3$. For, suppose $w_3=t_3$. Then $w_2\ne t_2$ as,
otherwise, since $G$ is 4-connected,  $w_2Fw_3=t_2t_3\in E(W)\subseteq E(H)$, a contradiction.
Now by (1), $G_1$ has a 5-separation $(H',L')$ such that $V(H'\cap L')=\{b,c,t_2,w_3,w_4\}$ is independent in $H'$,
$ww_1w_2w\subseteq H'-L'$, and $L+\{t_1,t_4,t_5\}\subseteq L'$. By the choice of $(H,L)$, $H'$ has no $V(H'\cap L')$-good wheel. So by
Lemma~\ref{5cutobs}, $(H',V(H'\cap L'))$ is the 9-vertex graph in Figure~\ref{obstructions}.
This is not possible, as $w$ is the unique neighbor of $w_4$ in $H'-L'$ and $wb\notin E(H')$.

Next, $w_4\ne t_4$. For, suppose $w_4=t_4$. Then by (1),
 $G_1$ has a 5-separation $(H',L')$ such that $V(H'\cap L')=\{b,c,t_2,t_3,w_4\}$ is independent in $H'$,
$w_1ww_3\subseteq H'-L'$, and $L+\{t_1,t_5\}\subseteq L'$. By the choice of $(H,L)$, $H'$ has no $V(H'\cap L')$-good wheel. So by
Lemma~\ref{5cutobs}, $(H',V(H'\cap L'))$ is the 8-vertex or 9-vertex graph in Figure~\ref{obstructions}.
Now $|V(H')|=9$; as otherwise $w_2=t_2$ is adjacent to all vertices in
$H'-L'$, which implies $bw\in E(H')$, a contradiction.  Let $v\in V(H'-L')\setminus
\{w,w_1,w_3\}$. Since $w_1w_3\notin E(H)$, $w$ and $v$ both have
degree 3 in $H'-L'$. Therefore, $v=w_2$ is the unique neighbor
of $t_2$ in $H'-L'$, which implies $wb\in E(H)$, a contradiction.

Now $w_2\ne t_2$. For, suppose $w_2=t_2$.
Then $G_1$ has a 5-separation $(H',L')$ such that $V(H'\cap L')=\{a,w_1,w_2,t_3,t_4\}$ is independent in $H'$,
$ww_3w_4w\subseteq H'-L'$, and $L+\{t_1,t_5\}\subseteq L'$. By the choice of $(H,L)$, $H'$ has no $V(H'\cap L')$-good wheel. So by
Lemma~\ref{5cutobs}, $(H',V(H'\cap L'))$ is the 9-vertex graph in Figure~\ref{obstructions}.
Since $w$ is the unique neighbor of $w_1$ in $H'-L'$,  $wa\in E(H')$, a contradiction.
$\Box$

\begin{itemize}
\item [(3)] $a\ne b$.
\end{itemize}
For, if $a=b$ then $G_1$ has a 5-separation $(H',L')$ such that
$V(H'\cap L')=\{a, c, t_2,t_3,t_4\}$ is independent in $H'$,
$|V(H'-L')|\ge 5$ (by (2)), and $L+\{t_1,t_5\}\subseteq L'$. So by
Lemma~\ref{5cutobs}, $H'$ has a $V(H'\cap L')$-good wheel. Now
$(H',L')$ contradicts the the choice of $(H,L)$,  as $|V(H'\cap
L')\cap V(G_1\cap G_2)|<|S|$. $\Box$

\medskip

We may assume $(w_1Fw_2-w_2)\cap t_1Dt_2=\emptyset$. For,  suppose not.
If $(w_1Fw_2-\{w_1,w_2\})\cap t_1Dt_2\ne \emptyset$ then by (1), $(ii)$ of Lemma~\ref{extension-4cut} holds. So assume
 $w_1\in V(t_1Dt_2)$. Then $G$ has a
4-separation $(G_1',G_2')$ such that $V(G_1'\cap G_2')=\{a,t_1,t_5,w_1\}$, $b\in V(G_1'-G_2')$,
and $G_2+\{t_i:i\in [5]\}\subseteq G_2'$. Now $V(G_1')\setminus V(G_2')=\{b\}$ as otherwise $(G_1',G_2')$ contradicts the choice of $(G_1,G_2)$.
But then we see that $N_H(t_5)\subseteq \{a,b\}$; so $(i)$ of Lemma~\ref{extension-4cut} holds.

We wish to consider the wheel $W_2$ consisting of those vertices and edges of $H$ cofacial with $w_2$.

\begin{itemize}
  \item [(4)] $w_2$ and $t_1$ are not cofacial in $H$, and $w_2,t_3$ are not cofacial in $H$.
\end{itemize}
First, suppose $w_2$ and $t_3$ are cofacial. Then $G_1$ has a 5-separation $(H',L')$ such that
$V(H'\cap L')=\{a,w_1,w_2,t_3,t_4\}$ is independent in $H'$,
$ww_3w_4w\subseteq H'-L'$, and $L+\{t_1,t_2,t_5\}\subseteq L'$.
By the choice of $(H,L)$, $H'$ has no $V(H'\cap L')$-good wheel. So by
Lemma~\ref{5cutobs}, $(H',V(H'\cap L'))$ is the 9-vertex graph in Figure~\ref{obstructions}. This is impossible, as $w$ is the unique neighbor of $w_1$ in $H'-L'$ and $wa\notin E(H')$.

 Now assume that  $w_2,t_1$ are cofacial. Then $c,w_2$ are cofacial as $c\in V(t_1Dt_2)$. So $G$ has a 4-separation $(G_1',G_2')$
  such that $V(G_1'\cap G_2')=\{b,c,w_2,w\}$, $w_1\in V(G_1'-G_2')$, and $G_2+\{t_i:i\in [5]\}\subseteq G_2'$. By the choice of
  $(G_1,G_2)$, $|V(G_1')|=5$.

Suppose $c=t_1$. Then $G$ has a 4-separation $(G_1'',G_2'')$
  such that $V(G_1''\cap G_2'')=\{a, w_1,t_1,t_5\}$, $b\in V(G_1''-G_2'')$, and $G_2+\{t_2,t_3,t_4\}\subseteq G_2''$. By the choice of
  $(G_1,G_2)$, $|V(G_1'')|=5$; so $N_G(b)=\{a,w_1,t_1,t_5\}$ and,
  hence, $N_H(t_5)=\{a,b\}$ and $(i)$ of Lemma~\ref{extension-4cut}
  holds.

Therefore, we may assume $c\ne t_1$.
  Now consider the 5-separation $(H',L')$ in $G_1$
 such that $V(H'\cap L')=\{a,t_5,t_1,w_2,w\}$ is independent in $H'$,
$bw_1c\subseteq H'-L'$ (by (3)), and $L+\{t_2,t_3,t_4\}\subseteq L'$. By the choice of $(H,L)$, $H'$ has no $V(H'\cap L')$-good wheel. So by
Lemma~\ref{5cutobs}, $(H',V(H'\cap L'))$ is the 8-vertex or 9-vertex graph in Figure~\ref{obstructions}.
This is impossible as $w_1$ is the unique neighbor of $w$ in $H'-L'$ and $w_1a\notin E(H')$.
 $\Box$

\medskip
Suppose $w_2t_2\in E(H)$ or  $w_2$ and $t_2$ are not cofacial. Then $W_2$ is $V(H\cap L)$-good.
By Lemma~\ref{extension-4cut}, we may assume that $(i)$ or $(ii)$ or
$(iii)$ or $(iv)$ of Lemma~\ref{extension-4cut} holds for $W_2$ (with $t_2$ as $s$).
By the separation $(H_1,H_2)$ we see that only $(i)$ of  Lemma~\ref{extension-4cut} can hold for $W_2$. Hence, there exists $t_2',t_2''\in
V(W_2)$ such that $N_H(t_2)=\{t_2',t_2''\}$ and $N_G(w_2)\cap V(t_2'F_2t_2'')=\emptyset$, where $F_2=W_2-w_2$.

We define $t_2'=t_2''=t_2$ when  $w_2t_2\notin E(H)$ and $w_2$ and $t_2$ are cofacial. Then

\begin{itemize}
\item [(5)]  $w_1$ and $t_2'$ are not  cofacial in $H$.
\end{itemize}
For, suppose they are. Then, since  $w_2t_2'\notin E(H)$,
  it follows from (1) that,  to avoid the cut $\{w_1,w_2,t_2''\}$ in $G$,   $w_1$ and $t_2''$ must be cofacial in $G_1$ and $w_1Fw_2=w_1w_2$.
Thus,  $G_1$ has a 5-separation $(H',L')$ such that
$V(H'\cap L')=\{a,w_1,t_2'',t_3,t_4\}$ is independent in $H'$,
$\{w, w_2,w_3,w_4\}\subseteq  V(H'-L')$, and $L+\{t_1,t_5\}\subseteq L'$. By the choice of $(H,L)$, $H'$ has no $V(H'\cap L')$-good wheel. So by
Lemma~\ref{5cutobs}, $(H',V(H'\cap L'))$ is the 9-vertex graph in Figure~\ref{obstructions}. But this is impossible as
$w$ is the unique neighbor of $w_1$ in $H'-L'$ and $wa\notin E(H')$. $\Box$

\medskip

Consider the 5-separation $(H',L')$ in $G_1$ such that
$V(H'\cap L')=\{b,c,t_2',w_2,w\}$ is independent in $H'$,
$w_1\in V(H'-L')$, and $L+\{t_i: i\in [5]\}\subseteq L'$.
By the choice of $(H,L)$, $H'$ has no $V(H'\cap L')$-good wheel. So by
Lemma~\ref{5cutobs}, $(H',V(H'\cap L'))$ is one of the graphs in Figure~\ref{obstructions}.
By (5),  $|V(H')|\ge 7$.

\begin{itemize}
\item [(6)] If  $|V(H')|\ge 8$ then $bFw_2=bw_1w_2$; $H_1$ has a path $Q$ from $b$ to $t_5$ and
internally disjoint from $aFb\cup cP_1t_1$; and  $P_3\cup t_3Dt_4\cup (w_3Fw_4-w_4)$ has a path
$R$ from $w_3$ to $t_4$.
\end{itemize}
Note that $w_1$ is the unique  neighbor of $w$ in $H'-L'$; so if  $|V(H')|\ge 8$ then $bFw_2=bw_1w_2$.
Also note that, by the choice of $\{b,c\}$, $H_1$ has a path $Q$ from $b$ to $t_5$ and
internally disjoint from $aFb\cup cP_1t_1$.

Moreover, $(P_3-t_3)\cup t_3Dt_4\cup (w_3Fw_4-w_4)$ has a path
$R$ from $w_3$ to $t_4$. For, otherwise, $w_4\in
V(t_3Dt_4)$.  Hence $H$ has a separation $(H'',L'')$ such that
$V(H''\cap L'')=\{b,c,t_2,t_3,w_4\}$ is independent in $H''$, $\{w,w_1,w_2,w_3\}\subseteq
V(H''-L'')$, and $L'+\{t_1,t_5\}\subseteq L''$. Since $|V(H''\cap
L'')\cap V(G_1\cap G_2)|<|S|$, we see from the choice of $(H,L)$ that
$H''$ has no $V(H''\cap L'')$-good wheel. Then $(H'',V(H''\cap L'')$
is the 9-vertex graph in Figure~\ref{obstructions}. However, this is
not possible, as $w_1$ is the unique neighbor of $a$ in $H''-L''$ and
$w_1w_4\notin E(G)$. $\Box$

\begin{itemize}
\item [(7)] We may assume $|V(H')|=7$.
\end{itemize}
First, suppose $|V(H')|=9$. Then $H'-\{w,w_1,c,t_2'\}$ has a
path $bv_1v_2v_3w_2$ such that $v_i\in N_G(w_1)$ for $i\in [3]$, $v_1,v_2\in N_G(c)$, and $v_2,v_3\in N_G(t_2')$.
Since $t_3\notin S$ and because of $Q$ and $R$, we see that $W_1$, the wheel consisting of vertices and edges of $H$
cofacial with $w_1$, is $(V(H\cap L),S)$-extendable.

Now suppose $|V(H')|=8$. Then $H'-\{w,w_1,c,t_2'\}$ has a
path $bv_1w_2$ such that $v_1\in N_G(w_1)$, $H'-L'$ is a path $w_1v_1v_2$, and either $v_2\in N_G(b)\cap N_G(c)$ and
$v_1,v_2\in N_G(t_2')$, or $v_2\in N_G(w_2)\cap N_G(t_2')$ and
$v_1,v_2\in N_G(c)$. Again, since $t_3\notin S$ and because of $Q$ and
$R$, we see that $W_1$ is $(V(H\cap L),S)$-extendable. $\Box$

\medskip

Thus, let $V(H'-L')=\{w_1,v\}$.  Suppose $v\notin V(w_1P_1c)$. Then
$N_G(v)=\{c,t_2',w_2,w_1\}$. Since $G$ is 4-connected, it follows from the choice of $\{b,c\}$ that
$H_1$ has a path $Q'$ from $b$ to $t_5$ internally disjoint from $aFb\cup W_1\cup cP_1t_1$.
Now, since $t_3\notin S$ and because of $Q'$ and $R$, we see that  $W_1$ is $(V(H\cap L),S)$-extendable.

Hence, we may assume $v\in V(w_1P_1c)$. Then $vw_2\in E(H)$, since $w_1, t_2'$ are not cofacial. Note that  $t_2'v\in E(H)$.
If $bv\in E(H)$ then, since $t_2\notin S$ and because of $Q,R$, we see that $W_1$ is $(V(H\cap L),S)$-extendable. So $bv\notin E(H)$. If $ct_2'\in E(H)$
then let $W_v$ denote the wheel consisting of vertices and edges of $H$ cofacial with $v$; then
by the choice of $\{b,c\}$,
$H_1$ has a path $Q'$ from $b$ to $t_5$ internally disjoint from
$aFb\cup W_v\cup cP_1t_1$, and, hence, since $t_3\notin S$ and because
of $R$,
$W_v$  is $(V(H\cap L),S)$-extendable.
So assume $ct_2'\notin  E(H)$.

We may assume  $c\ne t_1$. For, if $c=t_1$ then $G$ has a 4-separation $(G_1',G_2')$ such that $V(G_1'\cap G_2')=\{a,w_1,c,t_5\}$, $b\in V(G_1'-G_2')$, and
$G_2+\{t_2,t_3,t_4\}\subseteq G_2'$. By the choice of $(G_1,G_2)$, we see that $|V(G_1')|=5$ and $N_G(b)=\{a,t_5,t_1,w_1\}$, which
implies that $(i)$ of Lemma~\ref{extension-4cut} holds for $W$.

Then $G_1$ has a 5-separation $(H'',L'')$ such that $V(H''\cap L'')=\{a,t_5,t_1,v,w_1\}$ is independent in $H''$, $\{b,c\}\subseteq
V(H''-L'')$, and $L+\{t_2,t_3,t_4\}\subseteq L''$.
By the choice of $(H,L)$, $H''$ has no $V(H''\cap L'')$-good wheel. So by
Lemma~\ref{5cutobs}, $(H'',V(H''\cap L''))$ is one of the graphs in Figure~\ref{obstructions}.
Since $b$ is the unique neighbor of $w_1$ in $H''-L''$ and $c$ is the unique neighbor of $v$ in $H''-L''$,
$|V(H'')|=7$. If $t_1b\in E(H)$ then $N_H(t_5)\subseteq \{a,b\}$ and $(i)$ of Lemma~\ref{extension-4cut} holds. So assume $t_1b\notin E(H)$. Then
 $N_G(b)=\{a,t_5,c,w_1\}$ and $N_G(c)=\{b,t_1, t_5,v\}$. Thus, $((H-t_1)-t_5c, L\cup t_5ct_1)$
is a 5-separation in $G_1$ that contradicts the choice of $(H,L)$.
\end{proof}

\medskip

We further eliminate possibilities $(i)$ and $(iii)$ of Lemma~\ref{extension-4cut}.

\begin{lem}\label{extension-4cut2}
With the same assumptions of Lemma~\ref{extension-4cut}, $H$ has a $(V(H\cap L),S)$-extendable wheel, or
$G_1$ has a $V(G_1\cap G_2)$-extendable wheel, or  $(ii)$ of Lemma~\ref{extension-4cut} holds for any $w\in V(H-L)$ and
for any $V(H\cap L)$-good wheel $W$ in $H$ with center $w$.
\end{lem}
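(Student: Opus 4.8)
The plan is to assume that the first two alternatives fail, i.e. neither does $H$ have a $(V(H\cap L),S)$-extendable wheel nor does $G_1$ have a $V(G_1\cap G_2)$-extendable wheel. Then by Lemma~\ref{extension-4cut1} every $V(H\cap L)$-good wheel $W$ with center $w\in V(H-L)$ satisfies $(i)$, $(ii)$, or $(iii)$ of Lemma~\ref{extension-4cut}, and I must show that in fact $(ii)$ holds for every such $W$. So I would fix a good wheel $W$ with center $w$ for which $(ii)$ fails, note that then $(i)$ or $(iii)$ holds, and work toward a contradiction. Throughout I keep the planar drawing of $G_1$ with $V(G_1\cap G_2)$ on the boundary, write $V(H\cap L)=\{t_1,\dots,t_5\}$ with $(H,t_1,\dots,t_5)$ planar, and record the elementary fact that any $a\in V(W-w)\setminus N_G(w)$ is \emph{internal} to $H$: since $V(G_1\cap G_2)\subseteq V(L)$, if $a\in V(G_1\cap G_2)$ then $a\in V(H\cap L)\cap V(W)\subseteq N_G(w)$ because $W$ is good, a contradiction; hence $a\notin V(G_1\cap G_2)$, $a\notin\{t_1,\dots,t_5\}$, and every edge at $a$ lies in $H$. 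In particular the wheel vertices $a,b$ in $(i)$ and $(iii)$ are not counted in $S$, so replacing a $t_i$ in the cut by such a vertex lowers $|S|$.

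To eliminate $(i)$, I would first observe that the vertex $s$ with $N_H(s)=\{a,b\}$ and $a,b\in V(W-w)\setminus N_G(w)$ can never be the endpoint of an extending path for $W$: any $w$–$s$ path in $H$ ends with $as$ or $bs$ and must reach $a$ (or $b$) through a spoke endpoint, so it meets $V(W-w)$ at least twice, violating the extension condition; thus $s$ is a genuine obstruction and has to be removed by re-cutting. When $a=b$, I form the $5$-separation $(H^*,L^*)$ of $G$ by deleting $s$ from $H$ and moving $s$ together with the edge $sa$ into $L$; its cut is $(\{t_1,\dots,t_5\}\setminus\{s\})\cup\{a\}$, still of size $5$, with $S^*=S\setminus\{s\}$ strictly smaller. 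Since $W\subseteq H^*$ survives as a wheel but is no longer good (now $a\in V(W)\setminus N_G(w)$ lies in the cut), the minimality of $|S|$ in hypothesis $(b)$ forces $H^*$ to have \emph{no} good wheel, so Lemma~\ref{5cutobs} pins $(H^*,V(H^*\cap L^*))$ down to one of the $8$- or $9$-vertex graphs of Figure~\ref{obstructions}, and I finish exactly as in Lemma~\ref{extension-4cut1}: the rigid degree pattern of those graphs makes the unique neighbor of some cut vertex adjacent to $w$, contradicting $a\notin N_G(w)$, or forces an edge between two $t_i$, contradicting independence. The subcase $ab\in E(W)$ is where the naive re-cut overshoots, since moving $s$ out throws both $a$ and $b$ into the cut and, because $ab\in E(H^*)$, yields a non-independent $6$-cut outside the range of Lemma~\ref{5cutobs}; here I would instead pass to a neighboring wheel (the wheel centered at a spoke endpoint bracketing the arc through $a,b$, as in the $W_1,W_2$ arguments of Lemma~\ref{extension-4cut1}) and reroute through the sector to either produce an extension or reach case $(ii)$.

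To eliminate $(iii)$, I exploit that $|S|=3$ and that the $4$-cut $\{a,b,s_1,s_2\}$ of $H$ puts all of $S$ — hence all three $G$-cut vertices lying on $V(H\cap L)$ — on the $H_1$ side, with $w$ and the two non-$S$ vertices of $V(H\cap L)$ in $H_2$; the fourth vertex of $V(G_1\cap G_2)$, call it $v$, then lies in $V(L)\setminus V(H)$. Since $a,b$ are internal to $H$ while $s_1,s_2,s_3$ are the only vertices of $H_1$ that reach $G_2$, the set $\{a,b,s_1,s_2,s_3\}$ separates the interior $V(H_1)\setminus\{a,b,s_1,s_2,s_3\}$ from the rest of $G$. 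If this interior is empty then $V(H_1)=\{a,b,s_1,s_2,s_3\}$, which forces $N_H(s_3)\subseteq\{a,b\}$ and collapses $(iii)$ into the already-treated case $(i)$ for $s_3$; otherwise I obtain a genuine $5$-separation of $G$ with small side $H_1$, and the plan is to apply the minimality of $(G_1,G_2)$ together with Lemma~\ref{5cutobs} to force $H_1$ into one of the listed small graphs, reading off a contradiction from $a,b\notin N_G(w)$ and the independence of $V(H\cap L)$, or to extract from $H_1$ a good wheel that reroutes to a $V(G_1\cap G_2)$-extendable wheel in $G_1$, again contradicting the standing assumptions.

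The main obstacle, as in the preceding lemmas, is the planar bookkeeping: every re-cut I introduce must have an independent cut on a common face, and at each step I must correctly decide whether the re-cut side keeps a good wheel, invoking Lemma~\ref{5cutobs} precisely when goodness is destroyed so as to recognize the side as an explicit obstruction graph and then derive a contradiction from its forced adjacencies. The two hardest points I anticipate are the $ab\in E(W)$ subcase of $(i)$, where the natural reduction fails because the cut acquires the edge $ab$ and I must switch wheels to make progress, and the assembly of the $G$-level $5$-separation in $(iii)$, where the independence of $\{a,b,s_1,s_2,s_3\}$ is not automatic and the $G_2$-attachments of the three vertices $s_1,s_2,s_3$ must be tracked carefully before Lemma~\ref{5cutobs} can be applied.
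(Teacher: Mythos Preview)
Your overall strategy—assume $(i)$ or $(iii)$ holds for some good wheel $W$, re-cut to lower $|S|$, and invoke Lemma~\ref{5cutobs} on the new side—is the right engine, but two genuine gaps remain.

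First, the paper does not split into $(i)$ versus $(iii)$. Both yield the same picture: a separation $(H_1,H_2)$ of $H$ with $V(H_1\cap H_2)=\{a,b,t_1,t_4\}$, $a,b\in V(w_4Fw_1)\setminus N_G(w)$, and $t_5\in V(H_1-H_2)\cap S$; in $(i)$ the side $H_1$ degenerates to the triangle $abt_5$ (or the edge $at_5$ when $a=b$) plus isolated $t_1,t_4$, while in $(iii)$ one has $t_1,t_4\in S$ as well. All subsequent re-cuts are taken in $G_1$, not in $G$. Your separate $\{a,b,s_1,s_2,s_3\}$ cut for $(iii)$ is not a separation of $G$ of the required form (edges of $L$ among the $s_i$ would sit on the wrong side), its independence in the small side is not automatic, and it does not hook into either minimality hypothesis; the unified $\{a,b,t_1,t_4\}$ cut avoids all of this.

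Second, none of your re-cuts can fire until you have established that $w_i\ne t_i$ for $i\in\{2,3,4\}$ and that pairs such as $(w_1,t_2)$, $(w_4,t_3)$, $(w_2,t_3)$, $(w_3,t_2)$, $(t_5,w_1)$, $(t_5,w_4)$ are not cofacial in $H$. Without these facts the new cut $V(H'\cap L')$ need not be independent in $H'$, and $|V(H'-L')|$ may be at most four, so Lemma~\ref{5cutobs} can legitimately return an obstruction graph rather than the good wheel you need for a contradiction. The paper spends ten structural claims establishing exactly this before any re-cut succeeds; your ``rigid degree pattern'' sentence is where that work has to go, and it is not short. Even after all of this, the proof does not terminate structurally: after passing to the wheel around $w_2$ and pinning down a specific $7$-vertex configuration (with $N_G(t)=\{t_1,t_2,w_2,w_1\}$ and $P_1=ww_1tt_1$), the paper forms $G':=G-\{t,w_1\}+t_1w$ and finishes by extending a $4$-coloring of $G'$ to $G$. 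Your plan does not anticipate this coloring step, and it appears unavoidable.
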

\begin{proof}
 By Lemma~\ref{extension-4cut1},  we may assume that $(i)$ or $(iii)$ of  Lemma~\ref{extension-4cut} holds for
   some $V(H\cap L)$-good wheel $W$. Let $w$ be the center of $W$, and let $F=W-w$. Let  $V(H\cap L)=\{t_1, t_2, t_3, t_4,
   t_5\}$. We may assume that $G_1$ is drawn in a closed disc in the plane with no
   edge crossings such that the vertices in $V(G_1\cap G_2)$ occur on
   the boundary of that disc. Further, we may assume that  $(H,t_1,t_2,t_3,t_4,t_5)$ is planar.

 By Lemma~\ref{extension-5cut}, $W$  is $V(H\cap L)$-extendable. So let $P_1,P_2,P_3,P_4$ be paths in $H$ from
    $w$ to $t_1, t_2, t_3, t_4$, respectively, such that $V(P_i\cap P_j)=\{w\}$ for
    distinct $i,j\in [4]$, and $|V(P_i)\cap
    V(W)|=2$ for $i\in [4]$. Let $V(P_i)\cap V(F)=\{w_i\}$ for $i\in [4]$.

 Since $(i)$ or $(iii)$ of  Lemma~\ref{extension-4cut} holds for $W$, we may assume that there exist
 $a,b\in V(w_4Fw_1)$   and separation $(H_1,H_2)$ in $H$, such that $w_4,a,b,w_1$
  occur on $F$ in clockwise order,  $N_G(w)\cap V(aFb)=\emptyset$,
 $V(H_1\cap   H_2)=\{a,b,t_1,t_4\}$, $aFb+t_5\subseteq H_1$, and
   $bFa+\{w,t_2,t_3\}\subseteq H_2$. Moreover, $t_5\in S$; and   $t_1,t_4\in S$, or
   $H_1$ consists of the triangle  $abt_5a$ (or the edge $t_5a=t_5b$) and two isolated vertices
   $t_1$ and $t_4$.

We choose $a,b$ so that $aFb$ is minimal. We further choose $P_1,P_2,P_3,P_4$ to minimize $w_4Fw_1$ and then
$w_3Fw_2$. By the same argument in the proof of
Lemma~\ref{extension-4cut1}, we have
\begin{itemize}
\item [(1)]   $N_G(w)\cap V(w_3Fw_2)=\{w_1,w_2,w_3,w_4\}$.
\end{itemize}

Note that $w_2\ne t_2$ or $w_3\ne t_3$. Since, otherwise, $w_2Fw_3=t_2t_3\in E(H)$ (as $G$ is 4-connected), contradicting the fact that $V(H\cap L)$ is
independent in $H$. We claim that

\begin{itemize}
\item [(2)]  $w_1, t_2$ are not cofacial in $H$ and that
 $w_4, t_3$ are not cofacial in $H$.
\end{itemize}
For, suppose otherwise and assume by symmetry that $w_1$ and $
t_2$ are cofacial in $H$.
Then $w_4\ne t_4$, to avoid the 4-separation $(G_1',G_2')$ in $G$
such that $V(G_1'\cap G_2')=\{t_2,t_3,w_4, w_1\}$, $\{w,w_2\}\subseteq
V(G_1'-G_2')$ or $\{w,w_3\}\subseteq
V(G_1'-G_2')$, and $G_2+t_5\subseteq G_2'$.

Suppose $w_3=t_3$. Then $w_2\ne t_2$ and
$G$ has a 4-separation $(G_1',G_2')$
such that $V(G_1'\cap G_2')=\{w_1,t_2, w_3,w_4\}$, $\{w,w_2\}\subseteq
V(G_1'-G_2')$, and $G_2+t_5\subseteq G_2'$. Now $(G_1', G_2')$
contradicts the choice of $(G_1,G_2)$.

So $w_3\ne t_3$.
Then $G_1$ has  a 5-separation $(H', L')$  such that
$V(H'\cap L')=\{a, w_1, t_2, t_3, t_4\}$ is independent in $H'$, $ww_3w_4w\subseteq
H'- L'$, and $L+\{b, t_1, t_5\}\subseteq L'$. By the choice of $(H, L)$,
$H'$ does not contain any $V(H'\cap L')$-good wheel. So by Lemma
\ref{5cutobs}, $(H',V(H'\cap L'))$ is the 9-vertex graph in
Figure~\ref{obstructions}.  This is impossible  because one of the following holds:  $ w_4$ is the unique neighbor of $a$ in $H'-L'$ but
$w_1w_4\notin E(H')$, or  $w$ is the unique neighbor of $w_1$ in $H'-L'$ and
$aw\notin E(H')$. $\Box$

\medskip
Thus, $w_1\ne t_1$ (as $t_1,t_2$ are cofacial in $H$), $w_2\ne t_2$ (as $w_1,w_2$ are cofacial in $H$),
$w_3\ne t_3$ (as $w_3,w_4$ are cofacial in $H$),  and  $w_4\ne t_4$
(as $t_4,t_3$ are cofacial in $H$). Moreover,

\begin{itemize}
\item [(3)] $a\ne b$.
\end{itemize}
For, suppose $a=b$. Then $G_1$ has a 5-separation $(H',L')$ such that $V(H'\cap L')=\{b, t_1,t_2,t_3,t_4\}$ is independent in $H'$, $\{w,w_1,w_2,w_3,w_4\}\subseteq V(H'-L')$,
$L+t_5\subseteq L'$.  Hence, by Lemma~\ref{5cutobs}, $H'$ has a
$V(H'\cap L')$-good wheel. Now $(H',L')$ contradicts the choice of
$(H,L)$. $\Box$

\begin{itemize}
\item [(4)] $t_5$ is not cofacial in $H$ with $w_1$ or $w_4$.
\end{itemize}
 For, otherwise, assume by symmetry that $w_1$ and $t_5$ are cofacial. Then $G$ has a
4-separation $(G_1',G_2')$ such that $V(G_1'\cap G_2')=\{a,t_4,t_5,w_1\}$, $b\in V(G_1'-G_2')$,  and $w_1Fa\cup G_2\subseteq G_2'$. Hence,
by the choice of $(G_1,G_2)$, $|V(G_1')|=5$ and $N_G(b)=\{a,t_4,t_5,w_1\}$.
Therefore, we could have chosen $a=b$, contradicting (3) and the minimality of
$aFb$. $\Box$

\begin{itemize}
\item[(5)]  $w_2,t_3$ are not cofacial in $H$ and that $w_3,t_2$ are
  not cofacial in $H$.
\end{itemize}
For, suppose this is false and assume by symmetry that $w_2$ and $t_3$ are cofacial in $H$.
Then $G_1$ has a 5-separation $(H',L')$ such that $V(H'\cap
L')=\{a,w_1,w_2,t_3,t_4\}$ is independent in $H'$, $ww_3w_4w\subseteq H'-L'$, and $L+\{t_1,t_2,t_5\}\subseteq L'$.
By the choice of $(H,L)$, $H'$ does not contain any $V(H'\cap
L')$-good wheel. So $(H',V(H'\cap L'))$ must be the
9-vertex graph in Figure~\ref{obstructions}.
However, this is not possible, because one of the following holds:
$w_4$ is the unique neighbor of $a$ in $H'-L'$ but $w_4w_1\notin E(H')$, or  $w$ is the unique neighbor of $w_1$ in $H'-L'$ and
$aw\notin E(H')$. $\Box$

\medskip

Suppose $\{t_2,t_3\}\subseteq S$. Then  $G$ has a separation
$(G_1',G_2')$ such that $V(G_1'\cap G_2')=S\cup \{t_4\}$ or $V(G_1'\cap G_2')=S\cup \{t_1\}$, $H\subseteq
G_1'$, and $G_2+t_1\subseteq G_2'$ or $G_2+t_4\subseteq
G_2'$. However, $(G_1',G_2')$ contradicts the choice of
$(G_1,G_2)$. Thus, we may assume

\begin{itemize}
\item [(6)] $t_2\notin S$.
\end{itemize}

We will consider wheels $W_i$ (for $i\in [2]$) consisting of the vertices and edges of $H$ that are cofacial with $w_i$.

\begin{itemize}
\item [(7)] $H_2$ has disjoint paths $P,Q$ from $w_2,w_3$ to $t_3,t_4$, respectively, and
 internally disjoint from $w_4Fa\cup bFw_2\cup P_1$;  and $H_1$ has a path $R$ from $b$ to $t_5$ and
internally disjoint from $W_1\cup aFb+t_4$.
\end{itemize}
First, suppose $P,Q$ do  not exist. Then there exist $v_3\in V(P_3)\setminus \{t_3,w_3\}$ and separation $(G_1',G_2')$ in $G$ such that
$V(G_1'\cap G_2')=\{w_1,w_2,v_3,w_4\}$, $\{w,w_3\}\subseteq  V(G_1'-G_2')$, and
$G_2\cup L\subseteq G_2'$. Now $(G_1',G_2')$ contradicts the choice of $(G_1,G_2)$.

Now assume that the  path $R$ does not exist. Then $H_1$ has a 2-cut $\{p,q\}$
 separating $b$ from $t_5$ such that $p\in V(W_1-b)\cup \{t_1\}$ and
 $q\in V(aFb-b)\cup \{t_4\}$.

If $p=t_1$ then $q=t_4$ by the minimality of $aFb$. So $G$ has a
separation $(G_1',G_2')$ such that $V(G_1'\cap
G_2')=\{t_1,t_2,t_3,t_4\}$, $G_1'\subseteq G_1-t_5$, and
$G_2+t_5\subseteq G_2'$. Now $(G_1',G_2')$ contradicts the choice of
$(G_1,G_2)$.

Hence, $p\in V(W_1-b)$.   Then $q\notin V(aFb-b)$ to avoid the 3-cut $\{p,q,w_1\}$ in $G$. So $q=t_4$. Now
$G_1$ has a 5-separation $(H',L')$ such that $V(H'\cap L')=\{p,t_1,t_2,t_3,t_4\}$ is independent
in $H'$, $\{w, w_1, w_2, w_3, w_4\}\subseteq  V(H'-L')$, and $L+t_5\subseteq L'$. By
Lemma~\ref{5cutobs}, $H'$ has a $V(H'\cap L')$-good wheel. So
$(H',L')$ contradicts the choice of $(H,L)$. $\Box$

 \begin{itemize}
\item [(8)] We may assume that $w_2$ and $t_1$ are not cofacial in $H$.
\end{itemize}
For, otherwise, $G$ has a 4-separation $(G_1',G_2')$ such that
$V(G_1'\cap G_2')=\{b,t_1,w_2,w\}$, $w_1\in V(G_1'-G_2')$, and
$G_2+\{t_2,t_3,t_4,t_5\}\subseteq G_2'$.
Hence, $V(G_1'-G_2')=\{w_1\}$ by the choice of $(G_1,G_2)$. Since $w_1$ and $t_2$ are not cofacial in $H$, $w_2t_1\in E(H)$.
Now the paths $w_1t_1, R\cup w_1b,P\cup w_1w_2, Q\cup w_1ww_3$ show that
$W_1$ is $(V(H\cap L),S)$-extendable. $\Box$

\medskip

Then $w_2t_2\notin E(H)$ and $w_2$ and $t_2$ are cofacial. For,
otherwise, $W_2$ is a $V(H\cap L)$-good wheel in $H$. So by
Lemma~\ref{extension-4cut1},  $(i)$ or $(ii)$ or $(iii)$  of
Lemma~\ref{extension-4cut} occurs for $W_2$. Since  $w\in
W_2$ and $W_2$ is disjoint from $w_3Fw_1-\{w_1,w_3\}$, $(i)$ of
Lemma~\ref{extension-4cut} occurs with  $t_2\in S$, contradicting (6).

Hence, $G_1$ has a 5-separation $(H',L')$ such that $V(H'\cap L')=\{b,t_1,t_2,w_2,w\}$ is independent
in $H'$, $w_1\in V(H'-L')$, and $L+\{t_3,t_4,t_5\}\subseteq L'$. By the choice of $(H,L)$,
$H'$ does not contain any $V(H'\cap L')$-good wheel. Hence by
Lemma~\ref{5cutobs}, $(H', V(H'\cap L'))$ is one of the graphs in Figure~\ref{obstructions}.
Note that $|V(H')|\ge 7$ by (2). We may assume that

\begin{itemize}
\item [(9)] 
$|V(H')|=7$ and  $N_G(t)=\{t_1,t_2,w_2,w_1\}$ with $t\in V(H'-L')\setminus \{w\}$.
\end{itemize}
First, we may assume $|V(H')|=7$. For, suppose $|V(H')|\ge 8$. Then, since
$w_1$ is the only neighbor of $w$ in $H'-L'$,  we see, by checking the 8-vertex and 9-vertex graph in Figure~\ref{obstructions},
that $bFw_2=bw_1w_2$, $W_1$ is defined, and $P_1$ can
be chosen so that $W_1-w_1$
intersects $P_1-w$ just once. So  the paths $w_1P_1t_1, R\cup w_1b,P\cup  w_1w_2, Q\cup w_1ww_3$
show that $W_1$ is $(V(H\cap L),S)$-extendable.

Now let $t\in V(H'-L')\setminus \{w\}$. We may assume $N_G(t)=\{t_1,t_2,w_2,w_1\}$. This is clear if $P_1=ww_1t_1$. So assume
 $P_1=ww_1tt_1$. Then $tw_2\in E(H')$ by (2). If $tb\in
E(H')$ then $W_1$ is  a $V(H\cap L)$-good wheel, and
 $P_1-w, R\cup w_1b, P\cup  w_1w_2, Q\cup w_1ww_3$
show that $W_1$ is $(V(H\cap L),S)$-extendable. So assume $tb\notin
E(H')$. Hence, $N_G(t)=\{t_1,t_2,w_2,w_1\}$.  $\Box$

\begin{itemize}
\item [(10)]  $t_1b\in E(H)$.
\end{itemize}
 For, suppose   $t_1b\notin E(H)$. Consider the 5-separation $(H'',L'')$ in $G_1$
such that $V(H''\cap L'')=\{a,t_4,t_5, t_1,w_1\}$ is independent
in $H''$, $b\in V(H''-L'')$, $|V(H''-L'')|\ge 2$ (because of $R$ and $W_1$), and $L+\{t_2,t_3\}\subseteq L''$. By the choice of $(H,L)$ and by
Lemma~\ref{5cutobs}, $(H'', V(H''\cap L''))$ is one of the graphs in Figure~\ref{obstructions}.
Note that $b$ is the only neighbor of $w_1$ in $V(H''-L'')$. Since $t_1b\notin E(H)$, $|V(H'')|=7$. Because of $R$ and $W_1$, we see that
$R=bt_5$ and, hence, $\{b,t_1,t_5\}$ is a 3-cut in $G$,  a
contradiction. $\Box$

\medskip
Suppose $P_1=ww_1t_1$. If  $w_1Fw_2=w_1w_2$ then $w_1t_1, R\cup w_1b, P\cup  w_1w_2, Q\cup w_1ww_3$
show that $W_1$ is $(V(H\cap L),S)$-extendable. So assume $w_1Fw_2=w_1tw_2$. If there are
disjoint paths $P',Q'$ in $H$ from $t_2,w_3$ to $t_3,t_4$,
respectively, and internally disjoint from $w_4Fa\cup bFw_2\cup P_1$, then  $w_1t_1, R\cup w_1b, P'\cup  w_1tt_2, Q'\cup w_1ww_3$
show that $W_1$ is $(V(H\cap L),S)$-extendable. Hence, we may assume that  $P',Q'$ do  not exist.
Then there exist $v_3\in V(P_3)\setminus \{t_3,w_3\}$ and separation $(H'',L'')$ in $H$ such that
$V(H''\cap L'')=\{w_4, w_1, t, t_2, v_3\}$ is independent in $H''$, $ww_2w_3w\subseteq  H''-L''$, and
$L'+\{t_1,t_5\}\subseteq L''$.  By the choice of $(H,L)$,
$H''$ does not contain any $V(H''\cap L'')$-good wheel. Hence by
Lemma~\ref{5cutobs}, $(H'', V(H''\cap L''))$ is one of the graphs in Figure~\ref{obstructions}.
But this is not possible as $w$ is the unique neighbor of $w_1$ in $H''-L''$ and $wt\notin E(H'')$.

Therefore, $P_1=ww_1tt_1$.
Let $G':=G-\{t,w_1\}+t_1w$, which does not contain a $K_5$-subdivision
as $t_1w$ can be replaced by
$t_1tw_1w$. So $G'$ admits a 4-coloring, say $\sigma$. We now have a
contradiction by extending $\sigma$
to a 4-coloring of $G$ as follows: If $\sigma(t_1)= \sigma(w_2)$ then
greedily color $w_1,t$ in order; if $\sigma(t_1)\ne \sigma(w_2)$
then assign $\sigma(t_1)$ to $w_1$ and greedily color $t$.
\end{proof}

\section{Proof of  Theorem~\ref{main}}

Suppose that $G$ is a Haj\'{o}s graph and that $G$ has a 4-separation $(G_1,G_2)$ such that $(G_1,V(G_1\cap G_2))$ is planar and $|V(G_1)|\ge 6$,
and choose such $(G_1,G_2)$ that $G_1$ is minimal. 
Further, we assume that
$G_1$ is drawn in a closed disc in the plane with no edge crossings such that $V(G_1\cap G_2)$ is contained in the boundary of that disc.

By Lemma~\ref{5cutobs}, $G_1$ has a $V(G_1\cap G_2)$-good
wheel. Moreover, by Lemma~\ref{nogoodwheel}, any $V(G_1\cap G_2)$-good
wheel in $G_1$ is not $V(G_1\cap G_2)$-extendable. Hence,  by Lemma \ref{extension-5cut},
there exists a 5-separation $(H, L)$ in $G_1$ such that $V(H\cap L)$ is independent in $H$,  $V(G_1\cap
G_2)\subseteq V(L)$, $V(G_1\cap G_2)\not\subseteq V(H\cap L)$, and $H$
has a $V(H\cap L)$-good wheel. Let $S=V(H\cap L)\cap V(G_1\cap G_2)$.
We further choose $(H, L)$ such that
\begin{itemize}
\item [(1)] $|S|$ is minimum and, subject to this, $H$ is minimal.
\end{itemize}
Then by Lemma~\ref{extension-5cut},
\begin{itemize}
\item [(2)] any $V(H\cap L)$-good wheel in $H$ is $V(H\cap L)$-extendable.
\end{itemize}
 Let $V(H\cap L)=\{t_1, t_2, t_3, t_4, t_5\}$ such that  $(H,t_1,t_2,t_3,t_4,t_5)$ is planar.
Note that
\begin{itemize}
\item [(3)] the vertices in $S$ must occur consecutively in the cyclic ordering $t_1,t_2,t_3,t_4,t_5$.
\end{itemize}
For, suppose not. Then, without loss of generality,
assume that $t_1,t_3\in S$ but $t_2,t_5\notin S$. Let $V(G_1\cap G_2)=\{t_1,t_3,x,y\}$.

If $(G_1,t_1,x,t_3,y)$ is planar then there exists a 4-separation $(G_1',G_2')$ in $G$ such that
$V(G_1'\cap G_2')=\{t_1,t_2,t_3,y\}$, $H\subseteq G_1'$, $x\notin V(G_1')$, and
$G_2 \subseteq G_2'$; which  contradicts the choice of $(G_1,G_2)$. Similarly, if $(G_1,t_1,y,t_3,x)$ is planar we obtain a contradiction.

If $(G_1,t_1,x,y,t_3)$ or $(G_1,t_1,y,x,t_3)$  is planar then  $\{t_1,t_2,t_3\}$ would be a 3-cut in $G$.

So assume $(G,t_1,t_3,x,y)$ is planar (by renaming $x,y$ if necessary). Then
$G$ has a 4-separation $(G_1',G_2')$  such that $V(G_1'\cap G_2')=\{t_1,t_3,t_4,t_5\}$, $H\subseteq G_1'$, $\{x,y\}\not\subseteq V(G_1')$, and
$G_2 \subseteq G_2'$, which contradicts the choice of $(G_1,G_2)$. $\Box$

\medskip

 We claim that

\begin{itemize}
\item [(4)] no $V(H\cap L)$-good wheel in $H$ is  $(V(H\cap L),S)$-extendable.
\end{itemize}
For, suppose $W$ is a $V(H\cap L)$-good wheel in $H$ that is also $(V(H\cap L),S)$-extendable. Let $w$ be the center of $W$ and
assume that  $H$ has four  paths $P_1,P_2,P_3,P_4$ from
    $w$ to $t_1, t_2, t_3, t_4$, respectively, such that $V(P_i\cap P_j)=\{w\}$ for
    distinct $i,j\in [4]$, $|V(P_i)\cap
    V(W)|=2$ for $i\in [4]$, and $S\subseteq \{t_1, t_2,t_3, t_4\}$.

Let $k=4-|S|$. Since $W$ is not
$V(G_1\cap G_2)$-extendable, $L-(S\cup \{t_5\})$ does not contain
$k$ disjoint paths from $\{t_i:
 i\in [4]\}\setminus S$ to $V(G_1\cap G_2)\setminus S$.
Thus, $L-(S\cup \{t_5\})$ has a cut $T$ of size at most $k-1$ separating
 $\{t_i:
 i\in [4]\}\setminus S$ from $V(G_1\cap G_2)\setminus S$. Hence $T\cup S\cup \{t_5\}$ is a cut in
 $G$, and  $|T\cup
 S\cup \{t_5\}|=4$ since $G$ is 4-connected. Thus, $G$ has a
 4-separation $(G_1', G_2')$  such that $V(G_1'\cap G_2')=T\cup S\cup
 \{t_5\}$, $H\subseteq G_1'$, $G_1'$ is a proper subgraph of $G_1$, and $G_2\subseteq
 G_2'$. Note that $|V(G_1')|\geq 6$ because $W\subseteq H\subseteq G_1'$ and $V(H\cap L)$ is independent in
 $H$; so $(G_1',G_2')$ contradicts the choice of $(G_1, G_2)$. $\Box$

\medskip

Thus, by (4) and Lemma~\ref{extension-4cut2},
\begin{itemize}
\item [(5)] for any $V(H\cap L)$-good wheel $W$ in $H$ with center $w$,
$(ii)$ of Lemma~\ref{extension-4cut} holds.
\end{itemize}

By (2) (and without loss of generality), let  $P_1,P_2,P_3,P_4$ be
paths in $H$ from
    $w$ to $t_1, t_2, t_3, t_4$, respectively, such that $V(P_i\cap P_j)=\{w\}$ for
    distinct $i,j\in [4]$, and $|V(P_i)\cap
    V(W)|=2$ for $i\in [4]$.
Let $F=W-w$ (which is a cycle) and
let $V(P_i)\cap V(F)=\{w_i\}$ for $i\in [4]$. By (4), $t_5\in S$.



By (5),  there exist $s_1,s_2\in S\setminus V(W)$, $a, b\in V(W-w)\setminus N_G(w)$, and a separation $(H_1,H_2)$ in $H$ such that $|V(aFb)\cap N_G(w)|=1$,
$V(H_1\cap H_2)=\{a,b,w\}$, $V(aFb)\cup \{s_1, s_2\}\subseteq V(H_1)$,
and $V(H\cap L)\setminus \{s_1, s_2\}\subseteq V(H_2)$.
Without loss of generality, we may assume that $s_1=t_1, s_2=t_5$, $aFb\subseteq w_4Fw_2$, and $w_1\in V(aFb)$.
We choose $P_i$, $i\in [4]$, to minimize $w_4Fw_2$. Then it is easy to see that
\begin{itemize}
\item [(6)] $N_G(w)\cap V(w_4Fw_2)=\{w_1, w_2, w_4\}$.
\end{itemize}

We claim that

   \begin{itemize}
   \item [(7)]  $w_i\ne t_i$ for $i=2,3,4$.
   \end{itemize}
First, we show  $w_2\ne t_2$ and $w_4\ne t_4$. For, suppose the contrary and, by symmetry, assume $w_2=t_2$. Then $w_3\ne t_3$
as otherwise $w_2Fw_3=w_2w_3$ (since $G$ is 4-connected); so $t_2t_3\in E(H)$,
contradicting the fact that $W\subseteq H$ and $V(H\cap L)$ is independent in $H$. So $w_4\ne t_4$ to avoid the 4-separation $(G_1',G_2')$ with $V(G_1'\cap G_2')=\{w_1,w_2,t_3,w_4\}$,
$\{w,w_3\}\subseteq V(G_1'-G_2')$, and $G_2\subseteq G_2'$. Now $G_1$ has a 5-separation $(H',L')$ such that
$V(H'\cap L')=\{a,w_1,w_2,t_3,t_4\}$ is independent in $H'$,
$ww_3w_4w\subseteq H'-L'$, and $L\cup H_1\subseteq L'$. By the choice of $(H,L)$, $H'$ does not have any $V(H'\cap L')$-good wheel. Hence,
by Lemma~\ref{5cutobs}, $(H',V(H'\cap L'))$ must be the 9-vertex graph in Figure~\ref{obstructions}.
However, this is not possible,  as $w$ is the only neighbor of $w_1$ in $H'-L'$ and $wa\notin E(H)$.

Now suppose $w_3= t_3$.
Then  $G$ has a 4-separation $(G_1',G_2')$ such that $V(G_1'\cap G_2')=\{b,t_2,w_3,w\}$, $w_2\in V(G_1'-G_2')$, and $G_2\cup w_3Fw_1\subseteq G_2'$. Hence, by the
choice of $(G_1,G_2)$, we have $|V(G_1')|=5$. So $N_G(w_2)=\{b,t_2,w_3,w\}$ and $b$ has degree at most 2 in $H_2$.
Similarly, by considering the 4-cut $\{a,t_4,w,w_3\}$, we have $N_G(w_4)=\{a,t_4,w_3,w\}$ and $a$ has degree at most 2 in $H_2$.
 Thus, since $G$ is 4-connected, $a,b$ each have degree at least 2 in $H_1$.

Now consider the 5-separation $(H',L')$ in $G_1$ such that $V(H'\cap L')=\{a, w,b, t_1,t_5\}$ is independent in $H'$, $w_1\in V(H'-L')$,
and $H_2\cup L\subseteq L'$.
By the choice of $(H,L)$, $H'$ does not contain any $V(H'\cap L')$-good wheel. So by Lemma~\ref{5cutobs},
$(H',V(H'\cap L'))$ is one of the
graphs in Figure~\ref{obstructions}. Note that
$w_1$ is the only neighbor of $w$ in $H'-L'$. So $aw_1,bw_1\in E(H)$ as $a$ and $b$ each have degree at least 2 in $H'$.
Note that $H'\subseteq H_1-\{at_5, bt_1\}$.

We may assume that $a$ or $b$ has degree exactly 2 in $H_1$. For, otherwise, by checking
the graphs in Figure~\ref{obstructions}, we see that $|V(H')|=8$ or $|V(H')|=9$, and
$H_1$ contains a wheel $W'$ with center $w'$ such that $N_G(w')=V(W'-w')$ and $|V(W')|\in \{4,5\}$.
If $|V(W')|=4$ then, since $G$ is 4-connected, $G_1-w'$ has four disjoint paths from $V(W')$ to $V(G_1\cap G_2)$; which shows that
$W'$ is $(V(H\cap L),S)$-extendable, contradicting (4). So $|V(W')|=5$. Then, by the choice of $(H,L)$, $H-w$ has 5 disjoint paths from
$V(W')$ to $V(H\cap L)$; which, again, shows that $W'$ is $(V(H\cap L),S)$-extendable, contradicting (4).

Thus,   we may assume by symmetry that $a$ has  degree exactly 2 in $H_1$. Then $a$ has degree 4 in $G$.
Let $\sigma$ be a 4-coloring of $G-\{a,w,w_2,w_4\}$.
If $\sigma(w_1)=\sigma(t_4)$ then by greedily coloring $w_2,w,w_4,a$ in order we obtain a 4-coloring of $G$, a contradiction.
If $\sigma(w_1)=\sigma(t_3)$ then  by greedily coloring $a, w_4,w_2,w$ in order we obtain a 4-coloring of $G$, a contradiction.
So $\sigma(w_1)\notin \{\sigma(t_3),\sigma(t_4)\}$. Then assigning
$\sigma(w_1)$ to $w_4$ and greedily  coloring $w_2,w,a$ in order, we obtain a 4-coloring of $G$, a contradiction. $\Box$

   \begin{itemize}
   \item [(8)]  $w_2,w_4\notin V(D)$, where $D$ denotes the outer walk of $H$.
   \end{itemize}
First, $w_2\notin V(t_2Dt_3)$ and  $w_4\notin V(t_3Dt_4)$.
For, suppose not and assume by symmetry that $w_4\in V(t_3Dt_4)$. Then $G_1$ has a 5-separation $(H',L')$ such that
$V(H'\cap L')=\{b, t_2,t_3,w_4,w_1\}$ is independent in $H'$, $ww_2w_3w\subseteq H'-L'$, $bFw_4+w\subseteq H'$,  and $L+\{t_1,t_5\}\subseteq L'$. By the choice of $(H,L)$, $H'$ has no $V(H'\cap L')$-good wheel. Hence, by  Lemma~\ref{5cutobs}, $(H',V(H'\cap L'))$ is the
9-vertex graph in Figure~\ref{obstructions}.
However, this is impossible since $w$ is the only neighbor of $w_1$ in
$H'$ and $wb\notin E(H')$.

Now suppose (8) fails. Then we may assume by symmetry that
$w_4\in V(D)$. So $w_4\in V(t_4Dt_5)$. Then $w_4Fa=w_4a$ (by (6) and
4-connectedness of $G$) and $G_1$ has a 5-separation $(H',L')$ such that $V(H'\cap L')=\{b,t_1,t_5,w_4,w\}$
is independent in $H'$, $aFw_1\subseteq V(H')\setminus V(L')$,
and $bFw_4+\{t_2,t_3,t_4\}\subseteq L'$. By the choice of $(H,L)$, $H'$
has no $V(H'\cap L')$-good wheel. Hence,
since $w_4$ and $w$ each have exactly one neighbor in $V(H')\setminus V(L')$, it follows from Lemma~\ref{5cutobs} that $V(H')\setminus V(L')=\{a,w_1\}$.
Since $G$ is 4-connected, $N_G(a)=\{t_1,t_5,w_1,w_4\}$ and $N_G(w_1)=\{a,b,t_1,w\}$. However, $G_1$ now has a 5-separation $(H'',L'')$ such that
$H''=(H-t_5)-at_1$ and $L''=L\cup t_1at_5$. Note that $\{w,w_1,w_2,w_3,w_4\}\subseteq V(H'')\setminus V(L'')$; so by Lemma~\ref{5cutobs}, $(H'',L'')$ has a $V(H''\cap L'')$-good
wheel, contradicting the choice of $(H,L)$.  $\Box$

\begin{itemize}
\item [(9)]  $w_2, t_3$ are not cofacial, and $w_4, t_3$ are not cofacial.
\end{itemize}
Otherwise, suppose by symmetry that  $w_2, t_3$ are cofacial. Then $G_1$ has a 5-separation $(H', L')$ such that
$V(H'\cap L')=\{a, w_1, w_2, t_3, t_4\}$ is independent in $H'$,
$ww_3w_4w\subseteq H'-L'$,  and $L+\{b,t_1,t_5\}\subseteq L'$. By the choice of $(H, L)$,
$H'$ does not contain any $V(H'\cap L')$-good wheel. So by Lemma \ref{5cutobs}, $(H',V(H'\cap L'))$ is the 9-vertex graph in Figure~\ref{obstructions}. But this is not possible as
  $w$ is the only neighbor of $w_1$ in $H'-L'$ and $wa\notin E(H')$. $\Box$

\medskip
For $i=2,4$, let $W_i$ denote the wheel consisting of all vertices and edges of $H$
that are cofacial with $w_i$. Then

\begin{itemize}
\item [(10)]   for each $i\in \{2,4\}$, $W_i$ is not a $V(H\cap L)$-good wheel in $H$.
\end{itemize}
For, suppose $W_2$ is a $V(H\cap L)$-good wheel in $H$.
Since $W_2\cap (w_3Fw_1-\{w_1,w_3\})=\emptyset$ and $t_1,t_5\in S$, it follows from
(3) that $(ii)$ of Lemma~\ref{extension-4cut} does not hold for $W_2$, contradicting (5).  $\Box$

\medskip

Thus, by (7)--(10),  $w_2t_2, w_4t_4\notin E(H)$, $w_2$ and $t_2$ are
cofacial in $H$, and $w_4$ and $t_4$ are cofacial in $H$.
Since $G$ is 4-connected, $\{b,t_2, w_2\}$ and $\{a,t_4,w_4\}$ are not cuts in $G$. So by (8), $N_{H_2}(a)=\{t_4,w_4\}$ and
$N_{H_2}(b)=\{t_2,w_2\}$.

 We claim that there exists some $i\in \{2, 3, 4\}$ such that $w_3,
 t_i$ are cofacial and $w_3t_i\notin E(G)$.
For, otherwise, $W_3$ is a $V(H\cap L)$-good in $H$.  Since  $W_3$ is disjoint from $w_4Fw_2-\{w_2,w_4\}$, it follows from (3) that  $(ii)$
of   Lemma~\ref{extension-4cut} does not hold for $W_3$,  contradicting (5).

First, suppose $i\in \{2, 4\}$ and, by symmetry, assume $w_3, t_4$ are cofacial and  $w_3t_4\notin E(G)$.
Then $G$ has a 4-separation $(G_1',G_2')$ such that $V(G_1'\cap G_2')=\{a, t_4,w_3,w\}$,
 $w_4\in V(G_1'-G_2')$,   and $G_2\cup aFw_3\subseteq G_2'$. Since $w_4t_4\notin E(G)$, $|V(G_1'-G_2')|\geq 2$. Hence $(G_1',G_2')$ contradicts
the choice of $(G_1, G_2)$.

Thus,  $w_3, t_3$ are cofacial and  $w_3t_3\notin E(G)$.
By symmetry, we may assume that $P_3-w$ and $w_4$ are on the same side of the face which is incident with both $t_3$ and $w_3$. Now
$G_1$ has a 5-separation $(H',L')$ such that $V(H'\cap L')=\{a, t_4,t_3, w_3,w\}$ is independent in $H'$, $(P_3-w)\cup (P_4-w)\subseteq H'$,
    and $aFw_3+\{t_1,t_2,t_5\}\subseteq L'$. Moreover,   $|V(H'-L')|\geq 3$ since $(P_3-w)\cap (P_4-w)=\emptyset$,
    $w_3P_3t_3\ne w_3t_3$, and $w_4P_4t_4\ne w_4t_4$.
    By the choice of $(H,L)$, $H'$ has no $V(H'\cap L')$-good wheel. Therefore,
  by Lemma~\ref{5cutobs}, $(H',V(H'\cap L'))$ is the 8-vertex graph or 9-vertex graph in Figure~\ref{obstructions}.
  This is impossible, as $w_4$ is the only neighbor of $a$
in $H'-L'$ and $w_4t_4\notin E(H)$, a contradiction.
\qed


\end{document}